 \colorlet{cite}{red}
 \tikzset{ 
 	baseline=-2.3pt,
 	text height=1.5ex, text depth=0.25ex,
 	>=stealth,
 	node distance=2cm,
 	mid/.style={fill=white,inner sep=2.5pt},
 }
 \newtheoremstyle{mydef}
 {}		
 {}		
 {}		
 {}		
 {\scshape}	
 {. }		
 { }		
 {\thmname{#1}\thmnumber{ #2}\thmnote{ #3}}	
 \newtheorem{theorem}{Theorem}[section]
 \newtheorem*{theorem*}{Theorem}
 \newtheorem{proposition}[theorem]{Proposition}
 \newtheorem*{proposition*}{Proposition}
 \newtheorem{lemma}[theorem]{Lemma}
 \newtheorem*{lemma*}{Lemma}
 \newtheorem{corollary}[theorem]{Corollary}
 \newtheorem*{corollary*}{Corollary}
 \theoremstyle{definition}
 \newtheorem{definition}[theorem]{Definition}
 \newtheorem{example}[theorem]{Example}
 \newtheorem{remark}[theorem]{Remark}
 \theoremstyle{remark}
 \newtheorem*{conjecture*}{Conjecture}
 \newcommand{\R}{\mathbb R}
 \newcommand{\F}{\mathcal F}
 \newcommand{\xto}{\xrightarrow}
 \newcommand{\action}{\curvearrowright}
 \newcommand{\toto}{\rightrightarrows}
 \newcommand{\xfrom}{\xleftarrow}
 \newcommand{\rank}{\text{rank}}
 \newcommand{\gpd} {\mathcal{G}}
 \newcommand{\Gp}   {G}
 \newcommand{\g}   {\mathfrak{g}}
 \newcommand{\w}   {\omega}
 \newcommand{\Jmm} {\mathcal{J}}
 \newcommand{\Ker} {\mathrm{Ker}}
\author{Daniel L\'opez Garcia , and Nicolas Martinez Alba}
\subjclass{53D20, 58H05, 58D19, 37J15, 53D17}
 \address{}
 \date{\today}
 \title{Reduction of cosymplectic groupoids by cosymplectic moment maps}
\begin{document}
 	\maketitle
 	 	\begin{abstract}
 	 The Marsden-Weinstein-Meyer symplectic reduction has an analogous version  for cosymplectic manifolds. In this paper we extend this cosymplectic reduction to the context of groupoids.  Moreover, we prove how in the case of an algebroid associated to a cosymplectic groupoid, the integration commutes with the reduction (analogously to what happens in Poisson geometry). On the other hand, we show how the cosymplectic reduction of a groupoid induces a symplectic reduction on a canonical symplectic subgroupoid. Finally, we study what happens to the multiplicative Chern class associated with the $S^1$-central extensions of the reduced groupoid.\\

 	\noindent  \begin{tiny}KEYWORDS.\end{tiny} Group actions, cosymplectic groupoids, moment maps, co-symplectic reduction, Poisson manifolds.   
 	\end{abstract}
 	\vspace{1cm}

 	 	\noindent {\bf Acknowledgments:} We are grateful to David Iglesias for his valuable help in the discussions that inspired this work.  We thank Cristian Ortiz for helpful and insightful discussions on the reduction of Poisson groupoids. We would like to thank Javier de Lucas and Xavier Rivas for inviting us to their Gamma seminar, where many ideas from this article were improved. We are grateful to Luca Vitagliano for his detailed explanations on cosymplectic structures via line bundles and their associated cosymplectic groupoids.         We thank the anonymous referee for valuable comments and detailed suggestions that significantly improved the presentation of this article. L\'opez Garcia was partially supported by grant \# 2022/04705-8, S\~ao Paulo Research Foundation (FAPESP).

 	\section{Introduction}
 	Cosymplectic manifolds arise as an odd counterpart of symplectic manifolds \cite{Li}. They can also be found as the underlying structures of other types of geometry as CR geometry \cite{DT}, the odd-dimensional counterpart of K\"{a}hler manifolds and co-K\"{a}hler structures \cite{Bl1}, almost contact manifolds \cite{Bl0}, and in the description of time-dependent mechanics \cite{JLucas,GM} and classical field theories \cite{LSM}. Recently, these structures have been studied in the context of Lie groupoids giving rise to cosymplectic groupoids and their respective infinitesimal counterpart \cite{AW,FI} as central extensions of the underlying Poisson structure.

	In the literature, several results about symmetries and reductions for geometric structures exist.  In the case of cosymplectic geometry, we can consider two of them:  the reduction of the associated Poisson structure and the Albert reduction using cosymplectic moment maps (analogous to the Marsden-Weinstein-Meyer symplectic reduction).  The key difference between them is that the Poisson reduction does not preserve the nondegenerate condition in the cosymplectic case. On the other hand, the quotient of the level set of the cosymplectic moment map becomes cosymplectic again. For symplectic groupoids and Poisson structures on the units, these symmetries and reductions can be related by the integration theory of Lie groupoids \cite{FOR}.   The study of symmetries and reduction  was later extended to Lie algebroids equipped with infinitesimal multiplicative 2-forms \cite{BC, BC2}.  It is important to note that in \cite{CO}, the case of IM 2-form reductions for  quotients of vector bundles  has been studied; in particular, Poisson structure quotients are studied. In contrast, our results focus on moment map reductions targeting co-symplectic structure reductions.

	These notes extend the reduction technique in \cite{BC2} to the particular case of cosymplectic groupoids and their Lie algebroids, taking into account the results in \cite{FI} and adapting the cosymplectic reduction theorem as in \cite{Al,JLucas}. In Section \ref{sec:Preliminares} we start with an overview of cosymplectic manifolds, groupoids, and their infinitesimal data. 
	In Section \ref{sect: MainTheorem} we present the first main result of this article, which is a cosymplectic reduction by using a cosymplectic moment map in the context of Lie groupoids. 	Indeed, we consider symmetries of the cosymplectic forms equipped with a cosymplectic moment map that is also a groupoid morphism. Under certain conditions, the quotient of the 0-level set by the group action turns out to be a cosymplectic groupoid again (see Theorem \ref{thm: MainTheorem}).

Section \ref{sect: consecuencias} is intended to review some consequences of Theorem \ref{thm: MainTheorem}. In Section \ref{sect:consecuencias1} we consider the algebroid of a cosymplectic groupoid and some symmetries at the infinitesimal level to define an algebroid morphism. This map can be used to perform a reduction at the infinitesimal level. On the other hand, using Lie's second theorem we can integrate this algebroid morphism to a groupoid morphism which turns out to be a cosymplectic moment map. In Theorem \ref{thm:red vs int}, we show that the algebroid of the reduced cosymplectic groupoid corresponds to the reduction at the algebroid level.

Section \ref{sect:consecuencias2} investigates how the cosymplectic reduction of a cosymplectic groupoid is related to the symplectic reduction of a particular symplectic subgroupoid. On a cosymplectic groupoid, there is a symplectic foliation on the space of arrows. The leaf containing the units is a symplectic subgroupoid. In Theorem \ref{thm:mainSimplecticLeaf}, we show how cosymplectic reduction induces a Marsden-Weinstein symplectic reduction on the previously mentioned leaf. In section \ref{sect:consecuencias3}, using the relation between cosymplectic groupoids and $S^1$-central extensions with vanishing multiplicative Chern class, we study what happens to the multiplicative Chern class of the reduced groupoid. In Theorem \ref{thm:mainChern0}, we consider an oversymplectic groupoid with an action of a compact, connected Lie group; if the multiplicative Chern class of the initial groupoid vanishes, then the same is true for the reduced groupoid.\\

\noindent {\bf Notation:} For a groupoid $\gpd\toto M$ we denote $t,s:\gpd\to M$ as target and source maps, $m:\gpd\times_{M}\gpd \to \gpd$ the product of composable arrows on the groupoid, and $\epsilon:M\to \gpd$ the  units  of $\gpd$. Throughout the article we assume that the space of arrows $\gpd$ is connected. Multiplicative forms $\theta$ in $\gpd$ are defined as those differential forms for which the equation $m^*\theta=pr_1^*\theta+pr_2^*\theta$ holds, where  $pr_j:\gpd\times \gpd \to \gpd$ are the usual projection maps. In the case of an action of a Lie group $\Gp$ on a manifold $Q$, for any  $v\in \g=Lie(\Gp)$ we denote by $v_Q\in \mathfrak{X}(Q)$ the fundamental vector field corresponding to $v$. 
\\

\section{Preliminaries}\label{sec:Preliminares}
 	This section is dedicated to presenting the main objects of this article. Since we study the symmetries of cosymplectic groupoids and their infinitesimal counterpart, we present all the necessary definitions and tools to be used here.	
 	\subsection{Cosymplectic structures}
 	To establish the notation and as a quick reference, we describe the relation between a cosymplectic manifold and its Poisson structure; for a more detailed description, the reader can consult \cite{FI}.
 	
 	Let $(Q, \omega, \eta)$ be a \textbf{cosymplectic manifold}, i.e. $\omega\in \Omega_{cl}^2(Q)$ and $\eta\in \Omega_{cl}^1(Q)$, $\dim(Q)=2n+1$, and $\omega^n\wedge \eta$ defines a volume form. 
 	For this structure, the  Lichnerowicz morphism $\flat:\mathfrak{X}(Q)\to \Omega^1(Q)$  given by 
\begin{equation}\label{eq:flat}
\flat(X)=\imath_X\omega+(\imath_X\eta)\eta
\end{equation} 	
 	is an isomorphism. There is a remarkable vector field, called Reeb vector field, characterized by $\xi=\flat^{-1}(\eta)$, in particular it satisfies 
\begin{equation}\label{eq:Reeb}
\imath _{\xi}\omega=0\qquad \mbox{\ and\ }\qquad \imath_{\xi}\eta=1.
\end{equation}

\begin{remark}\label{rmk:flat} 
	The converse is also true and was originally stated in \cite[Prop 2]{Al}. The statement is as follows: if a manifold $Q$ is equipped with closed forms $\w\in \Omega^2(Q)$ and $\eta\in \Omega^1(M)$ for which $\flat$ in \eqref{eq:flat} is an isomorphism and there exists a vector field as in \eqref{eq:Reeb}, then $Q$ is odd-dimensional and $(Q,\w,\eta)$ is a cosymplectic manifold. $\diamond$
\end{remark}

\begin{example}\label{ex:submnfd}
	Let $(M, \omega)$ be a symplectic manifold of dimension $2m$. Let $i:N\hookrightarrow M$ be submanifold and $X$ be a symplectic vector field transverse  to $N$. Then, $(N, i^*\omega,  i^*(\imath_X\omega))$ is a cosymplectic manifold, in fact $d(i^*\w)=0$, $di^*(\imath_X\omega)=i^*(\mathcal{L}_X\omega-\imath_Xd\w)=0$, and because $X$ is traverse to $N$, $\dim(N)=2(m-1)+1$ and $(i^*\w)^{m-1}\wedge i^*(\imath_X\omega))$ is a volume form. The Lichnerowicz isomorphism is given by $\flat(Y)=i^*(\imath_{\tilde{Y}}\w+(\imath_{\tilde{Y}} \imath_X\w)\imath_X\w), $
	where $Y$ is $i$-related to $\tilde{Y}\in \mathfrak{X}(M)$.
\end{example}

\begin{example}\label{ex:product}
	Let $(M, \omega)$ be a symplectic manifold. Let  $L$ be a 1-dimensional manifold with coordinate chart $r$. Then $M\times L$ with $\text{pr}_1^*\omega $ and $\eta= \text{pr}_2^*dr$  is a cosymplectic manifold. The Reeb vector field is  $\xi:=\frac{\partial}{\partial r}$. 
	Note that there is  a regular symplectic foliation defined on $M\times L$, hence it is a Poisson manifold. 
	If  $f\in C^{\infty}(M\times L)$, then it is possible to define Hamiltonian vector fields on each leaf by $i_{X_{f_r}}\w=df_r$, where $f_r=f(-,r)$. Furthermore, on $M\times L$, 
	$$df_{(m,r)}=i_{X_{f_r}}\w+\dfrac{\partial f}{\partial r}\text{pr}_2^*dr=i_{X_{f_r}}\w+(\xi f)\eta$$ is satisfied. Moreover, for $f,g\in C^{\infty}(M\times L)$ such that $i_{X_g}\eta=0$, the Poisson bracket is $\w(X_f,X_g)$.
\end{example} 
The particular case studied in Example \ref{ex:product} motivates the definition of cosymplectic \textbf{Hamiltonian vector fields}. Thus, given a smooth function $f$, the cosymplectic Hamiltonian vector field is defined by $X_f=\flat^{-1}(df-(\xi f)\eta)$, or  equivalently by the unique vector field so that
\begin{equation}\label{eq:Hamvf}
\imath_{X_f}\omega=df-\xi(f)\eta, \qquad \mbox{\ and\ }\qquad \imath_{X_f}\eta=0. 
\end{equation}

Furthermore, the Poisson structure of Example \ref{ex:product} is a particular case of a more general result.	It is clear that $\ker(\eta)\subset TQ$ defines a regular integrable foliation $\F$ of corank 1. Moreover, the leaves of $\F$ are symplectic  with respect to the restriction of $\omega$. Indeed, there is a one to one correspondence between  cosymplectic structures on $Q$ and regular Poisson structures of corank 1 on $Q$ with a Poisson vector field transverse to the symplectic foliation (see \cite{GMP}).

 Hence, there is a Poisson structure $(Q, \pi_Q)$, whose bracket satisfies
	$\{f,g\}=X_fg=\w(X_f,X_g)$ because $i_{X_g}\eta=0$.
	As the Hamiltonian vector fields belong to $\Ker(\eta)$, we can verify that the bracket can also be written as
$\{f,g\}=\omega(\flat^{-1}(df),\flat^{-1}(dg)).$
	From the definition of $\xi$, we get $\mathcal{L}_\xi\w=0$ and $\mathcal{L}_\xi\eta=0$. By using the Cartan formula and \eqref{eq:Hamvf} we obtain
	$$\xi\{f,g\}=i_{X_f}i_{[\xi,X_g]}\w-i_{X_g}i_{[\xi,X_f]}\w=X_f(\xi g)-X_g(\xi f)=\{f,\xi g\}-\{g,\xi f\}.$$ 
	Therefore, the vector field $\xi$ is a Poisson vector field transverse to the leaves of $\F$.  
	
	A cosymplectic map $\phi:(Q,\omega, \eta)\to (Q', \omega', \eta')$ is a map between cosymplectic manifolds such that $\phi^*\omega'=\omega$ and $\phi^*\eta'=\eta$, in particular $\xi$ and $\xi'$ are $\phi$-related. If in addition $\phi$ is a diffeomorphism, then $i_{\phi_* X_f}\eta'=0$ and
	$$i_{\phi_* X_f}\w'=(\phi^{-1})^*(i_{X_f}\w)=d((\phi^{-1})^*f)-\xi'((\phi^{-1})^*f)\eta'.$$
	This relation says that $\phi_* X_f=X_{(\phi^{-1})^*f}$, and the map $\phi$ is a Poisson morphism between $\pi_Q$ and $\pi_{Q'}$.

 \subsubsection{Symmetries and reduction}
 \label{section:Symmetries and reduction}
 	For this section, we assume a Lie group $\Gp$ acting on a cosymplectic manifold on $(Q,\w,\eta)$ by cosymplectic diffeomorphisms. Thus, it induces a $\Gp$ Poisson action. In the case of a free and proper action, we obtain a reduced Poisson structure $\pi_r$ on $Q/\Gp$ with quotient map $q:Q\to Q/\Gp$ as Poisson morphism, that is $q_*\pi_Q=\pi_r$. By the correspondence between regular Poisson structures of corank 1 endowed with  a Poisson vector field transverse to the symplectic foliation and cosymplectic structures, we have that a Poisson $Q/\Gp$ reduction can be non-cosymplectic due to issues of rank or transversality. Nevertheless, there is a reduction procedure that preserves the cosymplectic structure, which is called \textbf{cosymplectic reduction} (see \cite{Al,CNY}). 
 	
 	Let $\Gp$ be a Lie group with Lie algebra $\g=Lie(\Gp)$. Let $(Q, \omega, \eta)$ be a cosymplectic manifold with an action  of $\Gp$  by cosymplectic morphisms. A smooth function $J:Q\to \g^*$ is called a \textbf{cosymplectic moment map} if it is $\Gp$-equivariant with respect to the coadjoint action, and  for any $v\in \g$ the Hamiltonian vector field $X_{\langle J, v\rangle}$ is the infinitesimal generator $v_Q\in \mathfrak{X}(Q)$. The value $0\in \g^*$ is called a \textbf{clean value} if $J^{-1}(0)\subset Q$ is a submanifold  and   $\ker(dJ)_x=T_x J^{-1}(0)$ for all $x\in J^{-1}(0)$ (in \cite{Al,MW}, it is called a \textbf{weakly regular value}). Clearly, a regular value is a clean value. Under these conditions there is a cosymplectic version of the classical Marsden-Weinstein  reduction theorem \cite{MW}:
 	\begin{theorem}[\cite{Al}]\label{thm:cosymp reduction}
 		Let $\Gp$ be a Lie group acting on a cosymplectic manifold $(Q, \omega, \eta)$ by cosymplectic automorphisms. Let $J:Q\to \g^*$ be a cosymplectic moment map such that $\xi(\langle J,v\rangle)=0$ for all $v\in \g$. If $0$ is a clean value of $J$, and  $\Gp$ acts freely and properly on $J^{-1}(0)$, then $Q_{red}=J^{-1}(0)/\Gp$ is a cosymplectic manifold. The cosymplectic structure  $(\omega_{red}, \eta_{red})$ is defined by 
 		$$i^*\omega=p^*\omega_{red}, \qquad i^*\eta=p^*\eta_{red},$$
 		where $i:J^{-1}(0)\to Q$ is the inclusion and $p:J^{-1}(0)\to Q_{red}$ is the projection.
 	\end{theorem}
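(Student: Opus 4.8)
The plan is to follow the classical Marsden--Weinstein reduction argument, carrying the Reeb direction along at every step. First I would extract the two infinitesimal identities that the moment map hypothesis provides: for every $v\in\g$, since $v_Q=X_{\langle J,v\rangle}$ is a cosymplectic Hamiltonian vector field, \eqref{eq:Hamvf} gives $\imath_{v_Q}\eta=0$ and, using the extra assumption $\xi(\langle J,v\rangle)=0$, also $\imath_{v_Q}\w=d\langle J,v\rangle$. In particular $dJ_x(\xi_x)=0$ for all $x$, so $\xi$ is tangent to $J^{-1}(0)$; and because $g^*\w=\w$, $g^*\eta=\eta$ determine $\xi$ uniquely via \eqref{eq:Reeb}, the action preserves $\xi$. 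Equivariance of $J$ together with $\Ad^*_g(0)=0$ shows $\Gp$ preserves $J^{-1}(0)$, so by the freeness and properness hypothesis $p\colon J^{-1}(0)\to Q_{red}$ is a principal $\Gp$-bundle, hence a surjective submersion along which forms pull back injectively.

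Next I would check that $i^*\w$, $i^*\eta$ and $\xi|_{J^{-1}(0)}$ are basic. Invariance is immediate from the previous step. For horizontality, $\imath_{v_Q}i^*\eta=i^*\imath_{v_Q}\eta=0$, while $\imath_{v_Q}i^*\w=i^*d\langle J,v\rangle=d\big(i^*\langle J,v\rangle\big)=0$ because $J$ vanishes identically on $J^{-1}(0)$. Hence there are unique $\w_{red}\in\Omega^2(Q_{red})$, $\eta_{red}\in\Omega^1(Q_{red})$ and $\xi_{red}\in\mathfrak{X}(Q_{red})$ with $i^*\w=p^*\w_{red}$, $i^*\eta=p^*\eta_{red}$ and $p_*(\xi|_{J^{-1}(0)})=\xi_{red}$; the first two equations are exactly those asserted in the statement. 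Closedness of $\w_{red}$ and $\eta_{red}$ follows at once from closedness of $\w$, $\eta$ and injectivity of $p^*$. It then remains only to verify the nondegeneracy condition, i.e.\ that $\w_{red}^{n-k}\wedge\eta_{red}$ is a volume form, where $k=\dim\g$ and $\dim Q=2n+1$; equivalently, by Remark \ref{rmk:flat}, that $\flat_{red}$ is an isomorphism admitting $\xi_{red}$ as a Reeb field.

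The core of the argument is the pointwise linear algebra at a point $x\in J^{-1}(0)$. Since $\flat_x$ is an isomorphism one first records that $\ker\w_x=\R\xi_x$ and that $\w_x$ restricts to a symplectic form on the hyperplane $V\ce\ker\eta_x$, with $T_xQ=V\oplus\R\xi_x$. Write $E\ce\g\cdot x\subset V$ for the orbit tangent space and, for a subspace $S\subseteq T_xQ$, let $S^{\perp}$ denote its $\w_x$-orthogonal in $T_xQ$. The identity $\imath_{v_Q}\w=d\langle J,v\rangle$ gives $\langle dJ_x(w),v\rangle=\w_x(v_Q(x),w)$, so the clean value hypothesis yields $T_xJ^{-1}(0)=\ker dJ_x=E^{\perp}$, and since $\xi_x\in\ker\w_x$ this splits as $(E^{\perp}\cap V)\oplus\R\xi_x$. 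Differentiating the equivariance of $J$ at a point where $J=0$ gives $\w_x(v_Q(x),w_Q(x))=0$, i.e.\ $E$ is isotropic in $(V,\w_x|_V)$, so $E\subseteq E^{\perp}\cap V$ and $\w_x|_V$ descends to a symplectic form on $(E^{\perp}\cap V)/E$, of dimension $2(n-k)$. Under the identification $T_xQ_{red}\cong T_xJ^{-1}(0)/E\cong\big((E^{\perp}\cap V)/E\big)\oplus\R\,\overline{\xi_x}$, the descended form $(\w_{red})_x$ equals this symplectic form on the first summand and annihilates $\overline{\xi_x}$, while $(\eta_{red})_x$ vanishes on the first summand and takes the value $1$ on $\overline{\xi_x}$ (because $\xi_x\notin\ker\eta_x$). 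Consequently $\ker(\w_{red})_x=\R\,\overline{\xi_x}=\R(\xi_{red})_x$, $\imath_{\xi_{red}}\w_{red}=0$, $\imath_{\xi_{red}}\eta_{red}=1$, and evaluating on a basis adapted to this splitting shows $\w_{red}^{n-k}\wedge\eta_{red}$ is nowhere zero; thus $(Q_{red},\w_{red},\eta_{red})$ is cosymplectic with Reeb field $\xi_{red}$.

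I expect this last step to be the main obstacle: unlike in the symplectic case one cannot simply quote the reduced-form computation, because $\w$ is degenerate, and one must check both that the Reeb direction descends to $Q_{red}$ (it does, since $\xi_x\in T_xJ^{-1}(0)$ but $\xi_x\notin\g\cdot x$, the orbit being contained in $\ker\eta_x$) and that it keeps pairing nondegenerately with $\eta_{red}$. The hypothesis $\xi(\langle J,v\rangle)=0$ enters in exactly two essential places — to make $i^*\w$ horizontal and to make $\xi$ tangent to $J^{-1}(0)$ — so its use should be highlighted. Everything else (the smooth structure on $Q_{red}$, closedness, $\Gp$-invariance) is routine once the $\Gp$-action on $J^{-1}(0)$ is known to be free and proper.
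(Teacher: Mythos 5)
The paper does not prove this theorem; it is quoted from Albert's work \cite{Al} and used as a black box. Your argument is a correct, self-contained reconstruction of the standard cosymplectic Marsden--Weinstein reduction: the identities $\imath_{v_Q}\eta=0$ and $\imath_{v_Q}\omega=d\langle J,v\rangle$, the basicity of $i^*\omega$, $i^*\eta$ and $\xi|_{J^{-1}(0)}$, and the pointwise splitting $T_xJ^{-1}(0)=E^{\perp}=(E^{\perp}\cap\ker\eta_x)\oplus\R\xi_x$ with $E$ isotropic in $(\ker\eta_x,\omega_x)$ are exactly the ingredients of Albert's proof, and the dimension count $\dim Q_{red}=2(n-k)+1$ together with the nonvanishing of $\omega_{red}^{\,n-k}\wedge\eta_{red}$ closes the argument correctly.
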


\begin{remark}\label{rmk:0-regular}
Similarly to the symplectic case, when the action of $G$ on $Q$ is free, it follows that 0 is a regular value of $J$.
From the Lichnerowicz isomorphism we observe that $v_Q=\flat^{-1}(d\langle J,v\rangle)$. 
By the freeness of the action we conclude that $v_Q(x)\neq 0$ for any $v\neq 0\in \g$. Consequently $dJ_x:T_xQ\to \g^*$ is a surjective map. $\diamond$
\end{remark} 

Using a correspondence between cosymplectic moment maps on $Q$ and symplectic moment maps on  $Q\times \R$, we can study conditions under which cosymplectic moment maps exist.
\begin{proposition}
	\label{prop:existence of J}
	Let $\Gp$ be a Lie group acting on a connected cosymplectic manifold $(Q, \omega, \eta)$ by cosymplectic automorphisms such that $\imath_{v_Q}\eta=0, $ for all $ v\in \mathfrak{g}$. 
	If $H^1(Q)=0$ or $H^1_{Ch}(\mathfrak{g})=0$, then there exists a cosymplectic moment map $J:Q\to \mathfrak{g}^*$ such that $\xi(\langle J,v\rangle)=0$ for all $v\in \g$.
\end{proposition}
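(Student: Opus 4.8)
The plan is to reduce the statement to the classical existence criterion for equivariant symplectic moment maps, by passing to the symplectization of $(Q,\omega,\eta)$ and exploiting the hypothesis $\imath_{v_Q}\eta=0$ to move moment maps back and forth. First I would set $\widetilde Q=Q\times\R$, with projection $\pr\colon\widetilde Q\to Q$ and coordinate $r$ on the second factor, equipped with $\widetilde\omega=\pr^*\omega+dr\wedge\pr^*\eta$. Since $d\eta=0$ we get $d\widetilde\omega=0$, and since $(\pr^*\omega)^{\,n+1}=0$ while $\omega^n\wedge\eta$ is a volume form on $Q$, the top power $\widetilde\omega^{\,n+1}$ equals $(n+1)\,dr\wedge\pr^*(\omega^n\wedge\eta)$, which is nowhere zero; hence $(\widetilde Q,\widetilde\omega)$ is symplectic. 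Extending the $\Gp$-action to $\widetilde Q$ trivially on the second factor, $g^*\widetilde\omega=\widetilde\omega$ (because $g^*\omega=\omega$ and $g^*\eta=\eta$), so the action is symplectic, and the fundamental vector field $v_{\widetilde Q}$ is $\pr$-related to $v_Q$ and tangent to the $Q$-factor; contracting and using $\imath_{v_Q}\eta=0$,
$$\imath_{v_{\widetilde Q}}\widetilde\omega=\pr^*(\imath_{v_Q}\omega).$$

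Since $\widetilde Q$ deformation retracts onto $Q$, we have $H^1(\widetilde Q)\cong H^1(Q)$, so the hypothesis reads ``$H^1(\widetilde Q)=0$ or $H^1_{Ch}(\g)=0$'' --- exactly the hypothesis under which $(\widetilde Q,\widetilde\omega)$ admits an $\Ad^*$-equivariant symplectic moment map $\Phi\colon\widetilde Q\to\g^*$, i.e. $\imath_{v_{\widetilde Q}}\widetilde\omega=d\langle\Phi,v\rangle$ for all $v\in\g$. (The mechanism, which I would recall: each $\imath_{v_{\widetilde Q}}\widetilde\omega$ is closed; it is exact either directly when $H^1(\widetilde Q)=0$, or, when $H^1_{Ch}(\g)=0$ so that $\g=[\g,\g]$, because the linear map $v\mapsto[\imath_{v_{\widetilde Q}}\widetilde\omega]\in H^1(\widetilde Q)$ kills $[\g,\g]$ --- as $\imath_{[u_{\widetilde Q},v_{\widetilde Q}]}\widetilde\omega=d(\imath_{u_{\widetilde Q}}\imath_{v_{\widetilde Q}}\widetilde\omega)$ is exact and $[u_{\widetilde Q},v_{\widetilde Q}]$ is proportional to $[u,v]_{\widetilde Q}$ --- so vanishes; in either case one builds a linear comoment map whose obstruction to being a Lie-algebra morphism is the constant Chevalley--Eilenberg $2$-cocycle $(u,v)\mapsto\{\langle\Phi,u\rangle,\langle\Phi,v\rangle\}-\langle\Phi,[u,v]\rangle$, removed by a correction in $\g^*$, and for connected $\Gp$ the resulting $\g$-equivariance integrates to $\Ad^*$-equivariance.)

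Finally I would restrict $\Phi$ to $Q\times\{0\}$. The displayed identity gives $d\langle\Phi,v\rangle=\pr^*(\imath_{v_Q}\omega)$, so $\partial_r\langle\Phi,v\rangle=0$ and $J:=\Phi|_{Q\times\{0\}}\colon Q\to\g^*$ is $\Gp$-equivariant with $d\langle J,v\rangle=\imath_{v_Q}\omega$. Then $\xi(\langle J,v\rangle)=\imath_\xi\imath_{v_Q}\omega=-\imath_{v_Q}\imath_\xi\omega=0$ by \eqref{eq:Reeb}, hence $d\langle J,v\rangle-\xi(\langle J,v\rangle)\eta=\imath_{v_Q}\omega$ together with $\imath_{v_Q}\eta=0$, and by the characterization \eqref{eq:Hamvf} of cosymplectic Hamiltonian vector fields this forces $X_{\langle J,v\rangle}=v_Q$. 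Thus $J$ is a cosymplectic moment map with $\xi(\langle J,v\rangle)=0$, as wanted. I expect the only genuinely delicate step to be the equivariance of $\Phi$ --- equivalently the vanishing of the constant Lie-algebra $2$-cocycle above; the symplectization and the bookkeeping of contractions with $\xi$ and $v_Q$ are routine.
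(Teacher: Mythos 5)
Your proposal is correct and follows essentially the same route as the paper: pass to the symplectization $Q\times\R$ with $\widetilde\omega=\pr^*\omega+dr\wedge\pr^*\eta$, extend the action trivially, use $\imath_{v_Q}\eta=0$ to identify symplectic moment maps upstairs with cosymplectic moment maps satisfying $\xi\langle J,v\rangle=0$ downstairs, and invoke the classical existence criterion under $H^1=0$ or $H^1_{Ch}(\g)=0$. The paper cites \cite[Prop.~4.5.17]{ORbook} for that last step where you sketch the mechanism, but the argument is the same.
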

 
\begin{proof}
Let $i:Q\to Q\times \R$ be the inclusion at the 0 level, and $p_Q$ and $p_\R$ be the canonical projections from $Q\times \R$ to $Q$ and $\R$, respectively. Consider  $Q\times \mathbb{R}$ with the symplectic form $\tilde{\w}=p_Q^*\w+p_\R^*dr\wedge p_Q^*\eta$, where $r$ is the standard coordinate on $\R$. There is a trivial extension of the $G$-action on $Q$ to a $G$-action on $Q\times \R$ given by $g\cdot(q,r)=(g\cdot q,r)$.  We claim that there exists a one-to-one correspondence between cosymplectic moment maps on $(Q,\w,\eta)$ satisfying $\xi\langle J,v \rangle=0$, and symplectic moment maps on $(Q\times \R,\tilde{\w})$.

Note that the vector fields $v_Q$ and $v_{Q\times \R}$ are $p_Q$-related. Thus, if $J:Q\to \g^*$ is a cosymplectic moment map such that $\xi\langle J,v \rangle=0$, then $\tilde J:Q\times \R\to \g^*$ given by $\tilde J(q,r)=J(q)$ satisfies 
$$\imath_{v_{Q\times \R}}\tilde \w=\imath_{v_{Q\times \R}}(p_Q^*\omega+p_\R^*dt\wedge p_Q^*\eta)=p_Q^*(\imath _{v_Q}\omega)=p_Q^*d\langle J,v\rangle=d\langle \tilde J, v\rangle.$$
On the other hand, given a symplectic moment map $\tilde J:Q\times \R\to \g^*$, we verify
$\partial_r \langle \tilde{J},v \rangle =\imath_{\partial_r}\imath_{v_{Q\times \R}}\tilde{\w}=p_Q^*(\imath_{v_Q}\eta)=0.$ Thus, $\tilde{J}$ does not depend on $r$, and we can define $J:Q\to \g^*$ such that $\tilde J=p_Q^*J$. If $\tilde \xi$ is the unique $p_\R$-vertical vector field which is $p_Q$-related with $\xi$, then  $\imath_{\tilde{\xi}}\imath_{v_{Q\times \R}}\tilde{\w}=0.$ Therefore, $\xi\langle J,v\rangle =\tilde \xi \langle \tilde J,v\rangle= 0.$
Consequently, from this correspondence we conclude that the obstruction to the existence of $\tilde J$ is the same as $J$. The result follows from \cite[Prop. 4.5.17]{ORbook} and from the homotopy equivalence between $Q$ and $Q\times \R$.
\end{proof}	

The proof of the previous result relies on the construction of a symplectic structure on $Q \times \mathbb{R}$, known as the {\it symplectization procedure}. As briefly mentioned, one can derive a symplectic moment map on $Q \times \mathbb{R}$ from a cosymplectic moment map on $Q$. In fact, the symplectic reduction $(Q//\Gp\times \mathbb{R},\tilde{\w}_{red})$
at the zero level induces the cosymplectic reduction $(Q//\Gp,\w_{red},\eta_{red})$ via 
$$\w_{red}=i^*\tilde{\w}_{red},\qquad \eta_{red}=i^*(\imath_{\partial r}\tilde{\w}_{red}),$$
where $i:Q//\Gp\to Q//\Gp\times \mathbb{R}$ is the natural inclusion $i(m)=(m,0).$

\subsection{Oversymplectic groupoids}\label{sec:intro-cs-gpd}
Let $\gpd\toto M$ be a Lie groupoid endowed with a closed multiplicative 2-form $\omega\in \Omega^2(\gpd)$, such that $\rank(\omega_x)=2\dim(M)$ for all $x\in M$. The pair $(\gpd, \omega)$ is called an \textbf{oversymplectic groupoid}, and was originally studied in \cite{BCWZ} in the context of integration of Dirac structures. 
The multiplicativity of the 2-form descends to infinitesimal data. To this end, we first recall some basic notions from the theory of \textbf{infinitesimal multiplicative forms},  following \cite{BC}. Let $(A,\rho,[\cdot,\cdot])$ be a Lie algebroid, and let $\mu:A\to \Lambda^{k-1}T^*M$  be  a vector bundle map. The map  $\mu$ is called an IM $k$-form \footnote{The definition used here is actually known as {\it closed IM forms}, and it is not the most general one. However, since we will work only with closed forms, it is enough to consider this simpler case instead of the general case.} if

\begin{align}
 		\label{eq:IM-skew}
 	\imath_{\rho(a)}\mu(b)&=-\imath_{\rho(b)}\mu(a),\\ \label{eq:IM-bckt}
 	\mu([a,b])&=\mathcal{L}_{\rho(a)}\mu(b)-\imath_{\rho(b)}d\mu(a),	
 	\end{align}
 	for any $a,b\in \Gamma(A)$.

A closed multiplicative form $\alpha\in \Omega_{cl}^k(\mathcal{G})$ on a Lie groupoid $\gpd$ with Lie algebroid $A$ naturally induces an IM $k$-form $\mu:A\to \Lambda^{k-1}T^*M$ by  
 	$$\mu:A\to \Lambda^{k-1}T^*M \qquad \langle \mu(u), X_1,\ldots, X_{k-1}\rangle:=\alpha(u,d\epsilon X_1,\ldots, d\epsilon X_{k-1}),$$
where $u\in A$ and $X_i\in TM$. Equivalently, this definition can be written  as $\epsilon^*(\imath_{u^R}\alpha)=\mu(u)$, where $u^R$ is the right invariant vector field on $\gpd$ induced by $u\in \Gamma(A)$. In particular, any oversymplectic groupoid $(\mathcal{G}, \omega)$  induces an IM 2-form $\mu:A\to T^*M$ on its Lie algebroid, satisfying the conditions \eqref{eq:IM-skew}-\eqref{eq:IM-bckt}. 
 
 \subsubsection{Symmetries and reduction of oversymplectic groupoids}\label{sec:over-gpd}
 In \cite{BC2}, the authors develop the theory of symmetries and reduction of closed IM 2-forms. Here we summarize this notion in the context of oversymplectic groupoids, with the aim of applying it to cosymplectic groupoids. Let $A$ be a Lie algebroid equipped with an IM 2-form $\mu:A\to T^*M$, and let $\Gp$ be a Lie group acting on $A$ by vector bundle morphisms covering the action of $\Gp$ on $M$, such that $\mu$ is $\Gp$-equivariant. Recall that the $\Gp$-action on $M$ lifts to a Hamiltonian action on $(T^*M,\omega_{{can}})$, with canonical moment map $J_{can}:T^*M\to \mathfrak{g}^*$ given by
\[
J_{{can}}(\alpha)(u) = \iota_{u_M}\alpha, \qquad u \in \mathfrak{g}={Lie}(\Gp),
\]
where $u_M$ is the infinitesimal generator of $u$.

 
\begin{proposition}[Lemma 3.1 and Proposition 3.2 in \cite{BC2}]\label{prop:over-mm}
The map $J = J_{{can}} \circ \mu : A \to \mathfrak{g}^*$ is a Lie algebra morphism, where $\mathfrak{g}^*$ carries the abelian Lie algebra structure. If $(A,\mu)=Lie(\gpd,\omega)$ for an oversymplectic groupoid $(\gpd,\omega)$ that is source-simply-connected, the action integrates to a $\Gp$-action on $\gpd$, and $J$ integrates to a $\Gp$-equivariant groupoid morphism $\mathcal{J}:\gpd \to \mathfrak{g}^*,$
satisfying
\[
\iota_{u_\gpd}\omega = -d\mathcal{J}(u), \qquad u\in \mathfrak{g}.
\]
\end{proposition}
Since $J$ is a Lie algebroid morphism, its zero level set $J^{-1}(0)$ is a $\Gp$ invariant vector subspace (pointwise). 
If $J^{-1}(0)$ is a vector subbundle of $A$ and the action of $\Gp$ on $M$ is free and proper, one can consider the reduced vector bundle $A_{{red}} \to M/\Gp$, 
uniquely characterized by the property that its pullback $p^*(A_{{red}})$ is naturally isomorphic to $J^{-1}(0)$, 
and such that the pullback of sections identifies sections of $A_{{red}}$ with the $\Gp$-invariant sections of $J^{-1}(0)$. Thus, the vector bundle $A_{{red}}$ inherits a unique Lie algebroid structure 
$(A_{{red}},[\cdot,\cdot]_{{red}},\rho_{{red}})$, so that:
\begin{itemize}
    \item The pullback of sections $p^*:\Gamma(A_{red})\to \Gamma(J_A^{-1}(0))$ preserves the Lie brackets.
    \item For any $a \in \Gamma(A_{red})$, the vector fields $\rho_{red}(a)$ and $\rho(p^*a)$ are $p$-related via the quotient map $p:M \to M/\Gp$. 
\end{itemize}
As a consequence, the following result holds:
\begin{proposition}[Lemma 4.1 and Proposition 4.3 in \cite{BC2}]\label{prop:over-0-level}
If $J^{-1}(0)$ is a subbundle of $A$, then it is a Lie subalgebroid. Moreover, if the action of $\Gp$ on $M$ is free and proper, one obtains a reduced bundle 
$A_{red} \to M/\Gp$, which is a Lie algebroid equipped with an IM 2-form $\mu_{red}: A_{red} \to T^*(M/\Gp)$.
\end{proposition}
The reduced IM-form is explicitly constructed from the restriction $\mu_0: J^{-1}(0)\to J_{can}^{-1}(0)$ by the inclusion map $J^{-1}(0)\to A$. Note that $\mu_0$ is $\Gp$-equivariant, and hence it is possible to define the bundle map $\mu_{red}:A_{red}\to T^*(M/\Gp)$ by the identity
    		\begin{equation}\label{eq:mu-red}
    		p^*(\mu_{red}(a))=\mu_0(p^*(a)),\qquad a\in \Gamma(A_{red}).
    		\end{equation}
    		The following diagram illustrates the pullback maps that appear in this equation.
    	$$\xymatrix@R=10pt@C=10pt{
    		J^{-1}(0)\simeq p^*A_{red} \ar[rr]\ar[dd]_{} \ar[rd]_{\mu_0} & &  A_{red}\ar'[d]^{}[dd] \ar[rd]^{\mu_{red}}  \\
    		& J^{-1}_{can}(0) \ar[rr]_{}   & & T^*(M/G)  \\
    		M \ar[rr]_{p}  & & M/G,   \\ 
    	} 
    	$$	

In the case where $(A,\mu) = Lie(\gpd,\omega)$ for an oversymplectic groupoid $(\gpd,\omega)$ that is source-simply-connected, the $\Gp$-action on $A$ integrates to a $\Gp$-action on $\gpd$, and $J$ integrates to a $\Gp$-invariant map $\mathcal{J}:\gpd\to\mathfrak{g}^*$, so that the following result holds\footnote{The integration of the $\Gp$-action on $A$ and of the map $J:A\to\mathfrak{g}^*$ to a $\Gp$-action on $\gpd$ and a groupoid morphism $\mathcal{J}:\gpd\to \mathfrak{g}^*$ follows from Lie’s second theorem.
}:
\begin{proposition}[Lemma 5.1 and Proposition 5.2 in \cite{BC2}]\label{prop:over-integration}
If $0$ is a clean value 
 of $\mathcal{J}$, and the $G$-action on $M$ (lifting to $\gpd$) is free and proper, then $\mathcal{J}^{-1}(0)$ is a wide subgroupoid of $\gpd\toto M$ with 
\[
Lie(\mathcal{J}^{-1}(0)) = J^{-1}(0).
\]
Moreover, the reduction of $\mathcal{J}^{-1}(0)$ by the $G$-action \footnote{It is worth to remark that if $\Gp$  acts freely and properly on $M$, then $\Gp$ also acts freely and properly on $\gpd$ (see \cite[Prop. 4.4]{FOR}).}	
 is a presymplectic Lie groupoid $(\gpd_{red},\omega_{red}) \toto M/G$ such that
\[
Lie(\gpd_{red}) = A_{red}, \qquad Lie(\omega_{red}) = \mu_{red},
\]
where $\omega_{red}$ is uniquely characterized by the identity $i_0^*\omega = p_0^*\omega_{red}$, with $i_0:\mathcal{J}^{-1}(0)\rightarrow \gpd$ the inclusion and $p_0:\mathcal{J}^{-1}(0)\to \gpd_{red}$ the quotient map.
\end{proposition}
To avoid confusion, we denote the action of $\Gp$ on $M$, $A$, and $\gpd$ by 
$\varphi^M$, $\varphi^A$, and $\varphi^\gpd$, respectively.

 \subsubsection{Cosymplectic groupoids}\label{sec:cs-gpd}  
This section is devoted to the introduction of the main object of the present article, namely cosymplectic groupoids. To this end, we make use of the notion of oversymplectic groupoids studied earlier.

\begin{definition}
    \label{def:CoymplecticGroupouid}
    A cosymplectic groupoid is an oversymplectic groupoid equipped with a closed, multiplicative $1$-form $\eta \in \Omega^1(\gpd)$ such that $(\gpd,\omega,\eta)$ is a cosymplectic manifold. 
\end{definition}
 Recall that cosymplectic structures can be described using the $\flat$-morphism as in \eqref{eq:flat}; however, this morphism is not enough to ensure the multiplicativity condition for $\eta$.  Thus, using $\flat$ to construct a cosymplectic groupoid from an oversymplectic groupoid proves to be of limited usefulness. However, there is a way to study when it is possible to construct cosymplectic groupoids from oversymplectic groupoids, which we describe below.

 Let $(\gpd, \omega)$ be a proper, orientable oversymplectic groupoid, and suppose that the foliation determined by $\ker\omega$ is simple, namely, the leaf space $\gpd/\ker \omega$ is a smooth manifold and the map $\gpd\to\gpd/\ker \omega$ is a surjective submersion. Furthermore, as the corank of $\omega$ is 1, i.e., the symplectic foliation is  regular of codimension 1; then  there exists an $S^1$-central extension  	 	 $$1\to M\times S^1\to (\gpd,\w)\xto{q} (\Sigma_1,\w_1)\to 1,$$ 
 	 	 where $(\Sigma_1, \w_1)$ is a symplectic groupoid with $q^*\w_1=\w$.
         
Hence, $\gpd$ becomes an $S^1$-principal bundle, where $\partial_\theta$ denotes the infinitesimal generator of the $S^1$-action on $\gpd$. Moreover, there exists a multiplicative form $\eta \in \Omega^1(\gpd)$ satisfying $\imath_{\partial_\theta}\eta=1$. The form $\eta$ is related to a \textit{multiplicative Ehresmann connection} (via the kernel of $\eta$), and the curvature of this connection is $\Omega=d\eta$. We refer the reader to \cite{FI,FM} for further details on this construction. The 2-form $\Omega$ is multiplicative and $q$-basic i.e., there exists a form $\Omega_1\in \Omega_{cl}(\Sigma_1)$ such that $q^*\Omega_1=\Omega$.  The {\bf multiplicative Chern class} is defined as the cohomology class  associated with $\Omega_1$ (in the multiplicative de Rham cohomology). 

One of the most interesting results in \cite{FI} is that this Chern class is an obstruction to an oversymplectic groupoid becoming cosymplectic:

\begin{theorem}\label{thm:CC-cosym}\cite[Theorem 4.12]{FI}
Let $(\gpd,\omega)$ be a corank 1, orientable, proper oversymplectic groupoid over $M$, whose  foliation $\ker \omega$ is simple.  Then $(\gpd,\omega)$ admits a multiplicative 1-form $\eta$ making it cosymplectic if and only if the multiplicative Chern class vanishes.
\end{theorem} 	 	  

Some consequences of the definition of a cosymplectic groupoid are the following:

\begin{proposition}[Proposition 2.7 and Proposition 2.9 in \cite{FI}]
Let $(\gpd,\omega,\eta)$ be a cosymplectic groupoid over $M$, and let $\Sigma$ be the leaf of the symplectic foliation containing the units $M \subset \gpd$. Then
\begin{enumerate}
\item  $\dim \gpd=2\dim M+1$;
\item $\Ker \eta\subset T\gpd$ is a multiplicative distribution;
\item the induced Poisson structure on $\gpd$  is multiplicative;
\item the Reeb vector field $\xi\in \mathfrak{X}(\gpd)$ is a bi-invariant Poisson vector field;
\item $(\Sigma,\omega|_{\Sigma})\toto M$ is a symplectic groupoid, where  $\omega|_{\Sigma}$ is the restriction to $\Sigma$ of $\w$;
\item $(\Sigma,\omega|_{\Sigma})$ integrates the Poisson structure $(M, \pi_M)$, where $\pi_M=t_* \pi_\mathcal{G}$.
\end{enumerate}
\end{proposition}

Following the  two canonical constructions of cosymplectic manifolds in Examples \ref{ex:submnfd} and \ref{ex:product}, it is possible to produce examples of cosymplectic groupoids.
 	\begin{example}
 		\label{ex:restriction gpd}
 		Let $(\gpd,\w)\toto M$ be a symplectic groupoid. Consider $i:\gpd_0\hookrightarrow \gpd$ a Lie subgroupoid over $M_0\hookrightarrow M$, both submanifolds of codimension 1. If there exists a multiplicative symplectic vector field $X$ transverse to $\gpd_0$, then $(\gpd_0,i^*\w,i^*(\imath_X\w))\toto M$ is a cosymplectic groupoid. For a concrete example, consider a manifold $Q$, $\gpd=T^*(Q\times \R)$ and $\w=\w_{can}$. Let $\gpd_0$ be the product of $T^*Q\toto Q$ and $\R\toto\{*\}$. If $r$ is the coordinate of the base of $T^*\R\to \R$, then the vector field $X=\partial_r$ is multiplicative, symplectic and transverse to $\gpd_0$.
\end{example}
	
\begin{example}
 		\label{example: TMxS}
 		Associated with the construction of  Example~\ref{ex:product} we can consider two  product groupoids.  Let $(\gpd_0,\w)\toto M$ be a symplectic groupoid. Consider 		$\gpd_1=\gpd_0\times S^1 \toto M$  and $\gpd_2=\gpd_0\times S^1\toto M\times S^1$, where the structural maps of $\gpd_1$ and $\gpd_2$ come from the groupoids $S^1\toto \{*\}$ and $S^1\toto S^1$, respectively. The cosymplectic forms $pr_{\gpd_0}^*\w$ and $pr_{S^1}d\theta$ are multiplicative in $\gpd_1$ making it a cosymplectic groupoid. However,  $(\gpd_2, \text{pr}_{\gpd_0}^*\omega, \text{pr}_{S^1}^*d\theta)$ is not a cosymplectic groupoids by dimensional issues.
 	\end{example}

\subsubsection{The infinitesimal counterpart}
We now describe the infinitesimal counterpart of a cosymplectic groupoid $(\gpd,\omega,\eta)$. As recalled above, the 2-form $\omega$ induces an IM 2-form $\mu:A \to T^*M$. In addition, using the identities 
\eqref{eq:IM-skew}--\eqref{eq:IM-bckt}, one obtains an IM 1-form $\nu:A \to \mathbb{R}$, associated with $\eta$, which satisfies
\begin{equation}\label{eq:nu}
 	\nu([a,b])=\rho(a)\nu(b)-\rho(b)\nu(a).
 	\end{equation}

Since the Reeb vector field $\xi$ is bi-invariant on $\gpd$, it can be expressed as 
$\xi = \overrightarrow{\zeta} = \overleftarrow{\zeta}$ for a section 
$\zeta \in \Gamma(A)$ lying in $\Ker \rho$ and in the center of the Lie algebra $(A,[\cdot,\cdot])$.

\begin{proposition}[Proposition 3.1 in \cite{FI}]\label{Prop:CentralExtension}
The Lie algebroid $A\to M$ of a cosymplectic groupoid $(\gpd,\w,\eta)$ is a central extension 
\begin{equation}\label{eq:c-ext}
0\to \langle \zeta \rangle \to A\overset{\mu}{\to} T^*M\to 0
\end{equation}
where $T^*M$ is equipped with the cotangent Lie algebroid structure associated with the Poisson manifold $(M,\pi_M)$, and $\langle \zeta \rangle$ is the trivial line bundle generated by $\zeta \in \Gamma(A)$. This extension has a natural splitting given by $\underline{\nu}:A\to \langle \zeta \rangle$ with $\underline{\nu}(a)=\nu(a)\zeta$.
\end{proposition} 	

Motivated by the result in Proposition \ref{Prop:CentralExtension}, we adopt the notion of an \textbf{infinitesimal multiplicative cosymplectic algebroid} (IMcs algebroid for short).

\begin{definition}\label{def:IMCS}
An IMcs algebroid over a Poisson manifold $(M,\pi_M)$ is a Lie algebroid $A$ 
equipped with an IM 2-form $\mu$, an IM 1-form $\nu$, and a section $\zeta \in \Gamma(A)$, 
so that \eqref{eq:c-ext} defines a central extension with splitting $\underline{\nu}(a) = \nu(a)\zeta$. 
We denote such an IMcs algebroid by $(A,\mu,\nu,\zeta)$.
\end{definition} 
Directly, from the definition of central extensions \eqref{eq:c-ext}, $\mu$ is surjective with kernel $\langle \zeta \rangle$. Moreover, the map $\mu\oplus \nu$ is an isomorphism of $A$ and $T^*M\oplus \mathbb{R}$. Under this identification, the anchor map and bracket are given by
 	\begin{equation}
 	\label{prop:centralext A}
 	\rho(df,\lambda)=X_f, \qquad [(df_1, g_1), (df_2, g_2)]=(d \{f_1,f_2\}_M, X_{f_1}(g_2)-X_{f_2}(g_1)),
 	\end{equation}
 	where $X_{f_i}$ are the Hamiltonian vector fields defined from the Poisson structure $(M, \pi_M)$. 

We conclude this section with a brief remark on cosymplectic reduction via the symplectization procedure.

\begin{remark}
The cosymplectic reduction described at the end of Section \ref{section:Symmetries and reduction} through the \textit{symplectization procedure} also applies to cosymplectic groupoids and their infinitesimal analogues.  From a cosymplectic groupoid $(\gpd \toto M, \omega, \eta)$ one can construct the symplectic groupoid 
$
(\gpd \times \mathbb{R} \toto M \times \mathbb{R}, \ \tilde{\omega} = \omega + dr \wedge \eta),
$
together with its IM-form $\tilde{\mu} : \tilde{A} \to T^*(M \times \mathbb{R})$, where $\tilde{A} = Lie(\gpd \times \mathbb{R})$, satisfying 
\[
\tilde{\mu}(u)(X) = \omega(u,d\epsilon X) + \eta(u).
\]

However, in this construction the central extension associated with $\gpd \toto M$ (and its Lie algebroid) is not explicitly encoded in $\gpd \times \mathbb{R} \toto M \times \mathbb{R}$, but depends solely on the pair of multiplicative forms $(\omega,\eta)$ (or equivalently, on their IM-forms). Moreover, the symplectization procedure may fail to capture certain geometric aspects, as discussed in Sections~\ref{sect:consecuencias2} and \ref{sect:consecuencias3}. For these reasons, we restrict ourselves to the cosymplectic reduction considered in the present manuscript.
\end{remark}

 	\section{Symmetries and reduction of cosymplectic groupoids}
 	\label{sect: MainTheorem}
 	
    We present here our first main contribution, which is a reduction theorem for the cosymplectic groupoid using a Marsden-Weinstein approach. We begin by pointing out why a Poisson reduction is not convenient when working with cosymplectic groupoids. This motivates the introduction of a moment map in cosymplectic groupoids and thus using Theorem \ref{thm:cosymp reduction} to carry out the reduction.
 	
 	Let $(\gpd,\omega,\eta)\toto M$ be a cosymplectic groupoid and let $(\gpd,\pi_\gpd)\toto M$ be its associated regular Poisson groupoid of corank 1. Let $\Gp$ be a Lie group acting on $\gpd$ and $M$, such that the actions are compatible with the structural maps and the action is by cosymplectic diffeomorphisms. Note that the action is also by Poisson morphisms. Then we obtain Poisson diffeomorphisms on $\gpd$ and on $M$. If the actions $\Gp\action M$ and $\Gp\action \gpd$ are free and proper, then $\gpd/\Gp\toto M/\Gp$ is a Poisson Lie groupoid.  Furthermore,  the Poisson structures can be reduced so that the respective quotient maps and $\gpd/\Gp\xto{\bar{t}} M/\Gp$ are Poisson maps. In particular the quotient maps and the target maps commutes
 
 	$$\xymatrix{
 		(\gpd, \pi_\gpd) \ar[d]_{t}  \ar[r]& (\gpd/\Gp, \pi_r) \ar[d]^{t_r}\\
 		(M, \pi_M)\ar[r] & (M/\Gp, \pi_r).
 	}$$
 	It is important to highlight that this reduction may not be a cosymplectic groupoid, as the rank condition may not hold (cf. \cite[Prop. 18]{GMP}). In  the following example, we start with a cosymplectic groupoid and after performing a Poisson reduction we obtain a groupoid that is not a regular Poisson groupoid of corank 1, and consequently it is not a cosymplectic groupoid.
 	\begin{example} 
 		\label{Example: ReduNonCos}  Consider the cosymplectic groupoid $\gpd=T^*\R^n\times S^1 \toto \R^n$ of Example \ref{example: TMxS}, with $\w=\w_{can}$. A point in $T^*\R^n$ is presented as a pair $(q, p)$ consisting of two points $q,p\in \R^n$ using that $T^*\R^n\cong \R^n\times \R^n$ canonically. Let $\R^{n-k}\action \R^n$  be the translation in the last $n-k$ coordinates, and  $\varphi$  the $\R^{n-k}$-action on $T^*\R^n\times S^1$ defined by $\varphi_{\alpha}( (q, p),\beta)=((\alpha\cdot q,p) , \beta)$. Since $\R^{n-k}$ acts on $\gpd$ by cosymplectic groupoid isomorphisms, we obtain the Poisson groupoid $\gpd/\R^{n-k}\toto \R^n/\R^{n-k}$. Note that $\dim(\gpd/\R^{n-k})=n+k+1$ and $\dim(\R^n/\R^{n-k})=k$, thus for $k\neq n$ the reduction is not a cosymplectic groupoid. 
 	\end{example}
 	
 		Thus, given a cosymplectic groupoid $(\gpd,\omega,\eta)\toto M$ and a Lie group $\Gp$ acting on $\gpd$ by groupoid automorphisms and cosymplectic diffeomorphisms, if we want to study cosymplectic reduction, we must consider reduction by a  moment map. Let $\Jmm:\gpd\to \g^*$ be a cosymplectic moment map. In addition, we assume that $\Jmm$ is a morphism of Lie groupoids, where $\g^*$ is considered as an Abelian group with respect to addition.  If we suppose that 0 is a clean value of $\Jmm$ we have the following result.
        
 		\begin{lemma}
 			\label{lem:Jlevel0subgroupoid}
 			Let $\Jmm:\gpd\to \g^*$ be a morphism of groupoids, where $\g^*$ is considered as an Abelian group with respect	to addition. If $0$ is a clean value of $\Jmm$, then $\Jmm^{-1}(0)\subset \gpd$ is a wide Lie subgroupoid.
 		\end{lemma}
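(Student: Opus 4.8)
The plan is to verify first---essentially by bookkeeping---that $\Jmm^{-1}(0)$ is an embedded submanifold closed under all the groupoid operations, and then to carry out the only substantial step, namely that the source (hence the target) map restricts to a submersion onto $M$. Write $\mathcal{H}\ce\Jmm^{-1}(0)$. By the clean-value hypothesis, $\mathcal{H}$ is an embedded submanifold of $\gpd$ with $T_g\mathcal{H}=\Ker(d\Jmm_g)$ for every $g\in\mathcal{H}$. Since $\Jmm\colon\gpd\to\g^*$ is a morphism of groupoids onto $(\g^*,+)$ regarded as a Lie groupoid over a point, it satisfies $\Jmm\circ\epsilon\equiv 0$, $\Jmm(m(g,h))=\Jmm(g)+\Jmm(h)$ for composable arrows, and $\Jmm(g^{-1})=-\Jmm(g)$; hence $\epsilon(M)\subset\mathcal{H}$ and $\mathcal{H}$ is stable under multiplication and inversion. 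Thus $\mathcal{H}$ is an abstract wide subgroupoid of $\gpd$ over $M$, the multiplication, unit and inversion of $\gpd$ restrict to smooth maps on $\mathcal{H}$, and it only remains to prove that $s|_{\mathcal{H}}\colon\mathcal{H}\to M$ is a submersion: indeed, the inversion of $\gpd$ is then a diffeomorphism preserving $\mathcal{H}$ and interchanging $s$ and $t$, so $t|_{\mathcal{H}}$ is a submersion as well, and $\mathcal{H}\toto M$ becomes a Lie groupoid, i.e.\ a wide Lie subgroupoid of $\gpd\toto M$.

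To prove the submersion claim, fix $g\in\mathcal{H}$ and set $x=s(g)$, $y=t(g)$. Since $s$ is already a submersion on $\gpd$, the equality $ds_g(T_g\mathcal{H})=T_xM$ is equivalent to $\Ker(d\Jmm_g)+\Ker(ds_g)=T_g\gpd$, and hence---applying $d\Jmm_g$, which annihilates $\Ker d\Jmm_g$, and conversely correcting an arbitrary vector by an element of $\Ker(ds_g)$---to the equality $d\Jmm_g(\Ker ds_g)=d\Jmm_g(T_g\gpd)$. Now, because $\Jmm(g)=0$, right translation by $g$ is a diffeomorphism of source-fibres that carries the identity arrow $\epsilon(y)$ to $g$ and preserves $\Jmm^{-1}(0)$; its differential at $\epsilon(y)$ identifies $A_y=\Ker(ds_{\epsilon(y)})$ with $\Ker(ds_g)$, and since $\Jmm(h\cdot g)=\Jmm(h)+\Jmm(g)=\Jmm(h)$ it intertwines $d\Jmm_{\epsilon(y)}|_{A_y}$ with $d\Jmm_g|_{\Ker ds_g}$. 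In particular $d\Jmm_g(\Ker ds_g)=d\Jmm_{\epsilon(y)}(A_y)$; and since $\Jmm\circ\epsilon\equiv 0$ makes $d\Jmm_{\epsilon(y)}$ vanish on the complement $d\epsilon_y(T_yM)$ of $A_y$ in $T_{\epsilon(y)}\gpd$, one has $d\Jmm_{\epsilon(y)}(A_y)=d\Jmm_{\epsilon(y)}(T_{\epsilon(y)}\gpd)$. Altogether
\[
d\Jmm_g(\Ker ds_g)=d\Jmm_{\epsilon(y)}\big(T_{\epsilon(y)}\gpd\big)\ \subseteq\ d\Jmm_g\big(T_g\gpd\big),
\]
and to promote this inclusion to an equality I invoke cleanness once more: since $T_{g'}\mathcal{H}=\Ker(d\Jmm_{g'})$ for all $g'\in\mathcal{H}$, the rank of $d\Jmm_{g'}$ equals $\dim\gpd-\dim\Jmm^{-1}(0)$, a constant along $\mathcal{H}$, so $d\Jmm_g(T_g\gpd)$ and $d\Jmm_{\epsilon(y)}(T_{\epsilon(y)}\gpd)$ have equal dimension and the inclusion above is an equality. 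This yields $ds_g(T_g\mathcal{H})=T_xM$, and since $g\in\mathcal{H}$ was arbitrary, $s|_{\mathcal{H}}$ is a submersion, which finishes the argument.

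I expect the submersion step to be the only real obstacle. The abstract subgroupoid structure of $\Jmm^{-1}(0)$ is purely formal, and $s|_{\mathcal{H}}$ is evidently a submersion along the units (where $\epsilon$ provides a section); the difficulty is in propagating this to an arbitrary $g\in\mathcal{H}$, and that is precisely where the two hypotheses have to cooperate: multiplicativity of $\Jmm$ transports the behaviour of $d\Jmm$ at $\epsilon(t(g))$ to $g$ through right translation---which preserves $\Jmm^{-1}(0)$ exactly because $\Jmm(g)=0$---while the clean-value condition pins $\rank(d\Jmm)$ to the constant value $\dim\gpd-\dim\Jmm^{-1}(0)$ along $\Jmm^{-1}(0)$, forcing the resulting inclusion of images to be an equality.
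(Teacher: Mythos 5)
Your proposal is correct and takes essentially the same route as the paper: the algebraic closure properties of $\Jmm^{-1}(0)$ are formal consequences of $\Jmm$ being a morphism onto $(\g^*,+)$, and everything reduces to showing that $s$ restricted to $\Jmm^{-1}(0)$ is a submersion. The only difference is that the paper simply computes $\ker(d(i^*s))_g=\ker(ds)_g\cap\ker(d\Jmm)_g$ and asserts $\rank(d(i^*s))=\dim(M)$, whereas you supply the justification for that rank claim, namely the right-translation argument transporting $d\Jmm_{\epsilon(t(g))}$ to $d\Jmm_g$ together with the constancy of $\rank(d\Jmm)$ along the level set forced by cleanness.
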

 		\begin{proof}
		For a pair of composable arrows $(g,h)$, we have $\Jmm (gh)=\Jmm(g)+\Jmm(h)$. Hence, $\Jmm^{-1}(0)$ is closed under the multiplication and inverse on $\gpd$. Also, for any $x\in M$, $\epsilon(x)\in \Jmm^{-1}(0)$, so the subgroupoid  is wide. What remains to be demonstrated is that the restrictions of the maps $s,t$ to $\Jmm^{-1}(0)$ are submersions. If $i:\Jmm^{-1}(0)\to \gpd$ is the inclusion map, then 
 $$\ker(d(i^*s))_g=\ker(ds)_g\cap T_g\Jmm^{-1}(0)=\ker(ds)_g\cap \ker(d\Jmm)_g,\quad \text{  for all }g\in \gpd.$$
 Thus, $\rank(d(i^*s))=\dim(\Jmm^{-1}(0))-(\dim(A)+\dim(\Jmm^{-1}(0))-\dim(\gpd))=\dim(M)$.
 		\end{proof}
 		A similar version of this result is proved in Proposition \ref{prop:over-integration} when $\Jmm$ is the integration of a Lie algebroid morphism. On the other hand, as a direct consequence of \cite[Prop. 4.4]{FOR}, it is enough to assume that the action is free on the units of the groupoid to guarantee that $0$ is a regular value of $\Jmm$, a fact stated in the following lemma.
	\begin{lemma}
	\label{lemma: FreeactionRegularvalue}
Let $\Gp$ be a Lie group acting on a cosymplectic groupoid $(\gpd, \omega, \eta)\toto M$ by cosymplectic groupoid automorphisms.  Let $\Jmm:\gpd\to \g^*$ be a cosymplectic moment map, and $\xi(\langle \Jmm,v\rangle)=0$ for all $v\in \g$. If  $\Gp$ acts freely on $M$, then $0$ is a regular value of $\Jmm$.
	\end{lemma}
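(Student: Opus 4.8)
The plan is to deduce this from Remark~\ref{rmk:0-regular}, whose hypothesis is freeness of the action on the \emph{whole} cosymplectic manifold, by first upgrading the assumed freeness on the units $M$ to freeness on the space of arrows $\gpd$, and then running verbatim the Lichnerowicz-isomorphism argument of that remark with $Q=\gpd$. So the only genuinely new ingredient is the passage from $M$ to $\gpd$.

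Concretely, I would proceed in four short steps. First, observe that a free $\Gp$-action on $M$ forces a free $\Gp$-action on $\gpd$: the source map $s\colon\gpd\to M$ is $\Gp$-equivariant because the action is compatible with the structural maps, so $g\cdot\gamma=\gamma$ implies $g\cdot s(\gamma)=s(\gamma)$ and hence $g=e$. (Equivalently, one may cite \cite[Prop.~4.4]{FOR}.) Second, conclude that for every $\gamma\in\gpd$ and every $0\neq v\in\g$ the fundamental vector field satisfies $v_\gpd(\gamma)\neq 0$, since otherwise $\gamma$ would be a fixed point of the one-parameter subgroup $\tau\mapsto e^{\tau v}$, contradicting freeness on $\gpd$. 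Third, use the defining property of the cosymplectic moment map together with the hypothesis $\xi(\langle\Jmm,v\rangle)=0$: by definition $v_\gpd=X_{\langle\Jmm,v\rangle}$, and since the Reeb derivative vanishes, \eqref{eq:Hamvf} gives $v_\gpd=\flat^{-1}(d\langle\Jmm,v\rangle)$; as $\flat$ is an isomorphism, Step~2 yields $d\langle\Jmm_\gamma,v\rangle\neq 0$ for all $\gamma\in\gpd$ and all $0\neq v\in\g$, i.e.\ the linear map $\g\to T_\gamma^*\gpd$, $v\mapsto d\langle\Jmm_\gamma,v\rangle$, is injective. Fourth, dualize: injectivity of this map is exactly surjectivity of its transpose $d\Jmm_\gamma\colon T_\gamma\gpd\to\g^*$, for every $\gamma\in\gpd$ and in particular for every $\gamma\in\Jmm^{-1}(0)$; hence $0$ is a regular value of $\Jmm$.

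I do not expect a real obstacle here: the argument is the groupoid analogue of Remark~\ref{rmk:0-regular}, and the only place where the groupoid structure enters is Step~1, which is immediate from equivariance of $s$ (or $t$). If one preferred not to re-derive the infinitesimal consequence of freeness, one could simply quote Remark~\ref{rmk:0-regular} applied to the cosymplectic manifold $(\gpd,\omega,\eta)$ once the free action on $\gpd$ is in hand.
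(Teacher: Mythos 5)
Your proposal is correct and follows essentially the same route as the paper: the paper's proof likewise upgrades freeness on $M$ to freeness on $\gpd$ via \cite{FOR} and then invokes Remark~\ref{rmk:0-regular} applied to the cosymplectic manifold $(\gpd,\omega,\eta)$. Your Steps 2--4 simply unpack the content of that remark, which the paper leaves as a citation.
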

	\begin{proof}
Since the action is free on $M$, then it is also free on $\gpd$. Thus we conclude the result using Remark \ref{rmk:0-regular}.
	\end{proof}

 		 	Motivated by Theorem \ref{thm:cosymp reduction}, we propose to address the cosymplectic structure of $\Jmm^{-1}(0)/\Gp$, and see if it has a compatible groupoid structure. Thus,  we present the first main theorem of these notes, which is a Marsden–Weinstein reduction of cosymplectic groupoids.

 		\begin{theorem}
 			\label{thm: MainTheorem}
 			Let $\Gp$ be a Lie group acting on a cosymplectic groupoid $(\gpd, \omega, \eta)\toto M$ by cosymplectic groupoid automorphisms. Let $\Jmm:\gpd\to \g^*$ be a cosymplectic moment map such that it is a Lie groupoid morphism, and $\xi(\langle \Jmm,v\rangle)=0$ for all $v\in \g$. If $\Gp$ acts  freely and properly on $M$, then $\gpd_{red}=\Jmm^{-1}(0)/\Gp$ is a cosymplectic groupoid over $M/G$. The cosymplectic structure  $(\omega_{red}, \eta_{red})$ is defined by 
 			$$i^*\omega=P^*\omega_{red}, \qquad i^*\eta=P^*\eta_{red},$$
 			where $i:\Jmm^{-1}(0)\to \gpd$ is the inclusion and $P:\Jmm^{-1}(0)\to \gpd_{red}$ is the projection.
 		\end{theorem}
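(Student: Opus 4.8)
The plan is to reduce everything to the cosymplectic Marsden--Weinstein theorem already available (Theorem~\ref{thm:cosymp reduction}) and then to upgrade the resulting cosymplectic \emph{manifold} to a cosymplectic \emph{groupoid} by transporting the multiplicative structure along the quotient map. First I would set up the geometry: since $0\in\g^*$ is fixed by the coadjoint action and $\Jmm$ is equivariant, $\Jmm^{-1}(0)$ is $\Gp$-invariant; since $\Gp$ acts freely and properly on $M$, it does so on $\gpd$ by \cite[Prop.~4.4]{FOR}, hence on the invariant submanifold $\Jmm^{-1}(0)$; and by Lemma~\ref{lemma: FreeactionRegularvalue} the value $0$ is a regular value of $\Jmm$, in particular a clean value, so by Lemma~\ref{lem:Jlevel0subgroupoid} $\Jmm^{-1}(0)\toto M$ is a wide Lie subgroupoid on which $\Gp$ acts by groupoid automorphisms. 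Applying Theorem~\ref{thm:cosymp reduction} to $\Jmm^{-1}(0)\hookrightarrow\gpd$ (the extra hypothesis $\xi(\langle\Jmm,v\rangle)=0$ is exactly what that theorem requires) already yields that $\gpd_{red}=\Jmm^{-1}(0)/\Gp$ is a cosymplectic manifold with $(\omega_{red},\eta_{red})$ characterized by $i^*\omega=P^*\omega_{red}$ and $i^*\eta=P^*\eta_{red}$.

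Next I would produce the groupoid structure on $\gpd_{red}\toto M/\Gp$. The structure maps $s,t:\Jmm^{-1}(0)\to M$ (submersions by Lemma~\ref{lem:Jlevel0subgroupoid}), the restricted multiplication $m_{0}:\Jmm^{-1}(0)\times_{M}\Jmm^{-1}(0)\to\Jmm^{-1}(0)$, the unit $\epsilon$ and the inversion are all $\Gp$-equivariant, so each descends to a smooth map on the corresponding free-and-proper quotient; in particular $\bar s,\bar t$ are submersions, so $\gpd_{red}\times_{M/\Gp}\gpd_{red}$ is a manifold and, because the diagonal $\Gp$-action on $\Jmm^{-1}(0)\times_M\Jmm^{-1}(0)$ is again free and proper, $\mathcal P:=P\times P$ restricts to a surjective submersion $\Jmm^{-1}(0)\times_M\Jmm^{-1}(0)\to\gpd_{red}\times_{M/\Gp}\gpd_{red}$. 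The groupoid identities for $\gpd_{red}$ then follow from those of $\Jmm^{-1}(0)$ by surjectivity of $P$, and the dimension count $\dim\gpd_{red}=\dim\gpd-2\dim\g=2(\dim M-\dim\g)+1=2\dim(M/\Gp)+1$ is consistent with a cosymplectic groupoid.

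It then remains to check multiplicativity of $\omega_{red}$ and $\eta_{red}$, and here the idea is to verify the identity upstairs and divide by the surjective submersion $\mathcal P$. Writing $\iota$ for the inclusion of $\Jmm^{-1}(0)\times_M\Jmm^{-1}(0)$ into $\gpd\times_M\gpd$, one has $i\circ m_{0}=m\circ\iota$, $P\circ m_{0}=m_{red}\circ\mathcal P$, and the analogous compatibilities between the two projections and the quotient maps; combining these with $i^*\omega=P^*\omega_{red}$ and the multiplicativity of $\omega$ on $\gpd$ gives
$$\mathcal P^*\,m_{red}^*\omega_{red}\;=\;m_{0}^*\,i^*\omega\;=\;\iota^*m^*\omega\;=\;\iota^*\big(pr_1^*\omega+pr_2^*\omega\big)\;=\;\mathcal P^*\big(pr_1^*\omega_{red}+pr_2^*\omega_{red}\big),$$
where in the last two expressions $pr_1,pr_2$ denote the projections at the appropriate level. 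Since $\mathcal P$ is a surjective submersion, $\mathcal P^*$ is injective on differential forms, whence $m_{red}^*\omega_{red}=pr_1^*\omega_{red}+pr_2^*\omega_{red}$; running the same argument for $\eta$ gives multiplicativity of $\eta_{red}$, and together with the previous paragraph this shows $(\gpd_{red},\omega_{red},\eta_{red})\toto M/\Gp$ is a cosymplectic groupoid.

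I expect the main obstacle to be the bookkeeping in the second step: ensuring that every structural map survives the passage to the quotient as a \emph{smooth} map, that $\bar s,\bar t$ stay submersions, and that $\gpd_{red}\times_{M/\Gp}\gpd_{red}$ is genuinely the quotient of $\Jmm^{-1}(0)\times_M\Jmm^{-1}(0)$ by a free proper $\Gp$-action---this last point being precisely what legitimizes the ``divide by $\mathcal P$'' argument in the multiplicativity step. The cosymplectic content is entirely carried by Theorem~\ref{thm:cosymp reduction}; the genuinely new input is only the compatibility of that reduction with the multiplicative groupoid structure.
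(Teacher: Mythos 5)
Your proposal is correct and follows essentially the same route as the paper's own proof: descend the groupoid structure along the $\Gp$-equivariant quotient, invoke Theorem~\ref{thm:cosymp reduction} for the cosymplectic manifold structure, and check multiplicativity upstairs before dividing by the surjective submersion $P\times P$. Your extra care about identifying $\gpd_{red}\times_{M/\Gp}\gpd_{red}$ with the quotient of the composable pairs is a point the paper leaves implicit in its commutative diagram, but it is the same argument.
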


\begin{proof}
	From Lemma \ref{lemma: FreeactionRegularvalue},  it follows that $\Jmm^{-1}(0)$ is a smooth manifold. 
	The equivariance of $\mathcal{J}$, together with the structural maps of the groupoid and the freeness and properness of the $\Gp$-action, 
	give rise to the reduced Lie groupoid $\gpd_{red} = \Jmm^{-1}(0)/\Gp\toto M/\Gp$. 
	Moreover, the quotient maps 
	$
	(P,p): (\mathcal{J}^{-1}(0) \toto M) \longrightarrow (\gpd_{red} \toto M/\Gp)
$
	define a morphism of Lie groupoids. 
	
	Moreover, by Theorem \ref{thm:cosymp reduction}, the manifold $(\gpd_{red}, \w_{red},\eta_{red})$ characterized by $i^*\omega=P^*\omega_{red}$ and  $i^*\eta=P^*\eta_{red},$ is a cosymplectic manifold. 
	To show that $\gpd_{red}\toto M/\Gp$ is a cosymplectic groupoid it remains to verify that $\omega_{red}$ and $\eta_{red}$  are multiplicative. Since the quotient map commutes with the structural groupoids maps, and by Lemma \ref{lem:Jlevel0subgroupoid} we have  the next commutative diagram 
	$$\xymatrix@R=10pt@C=10pt{
		\Jmm^{-1}(0)^{(2)} \ar[rr]_{m_0}\ar[dd]_{(i,i)} \ar[rd]_{(P,P)} & & \Jmm^{-1}(0) \ar'[d]^{i}[dd] \ar[rd]^{P}  \\
		& \gpd_{red}^{(2)} \ar[rr]_{m_{red}\hspace{5mm}}   & & \gpd_{red}  \\
		\gpd^{(2)} \ar[rr]_{m}  & & \gpd.   \\ 
	}
	$$	
	
	From which we conclude,
	$$P^*m_{red}^*\eta_{red}=m_0^*i^*\eta=i^*(pr_1^*\eta+pr_2^*\eta)=P^*(pr_1^*\eta_{red}+pr_2^*\eta_{red}),$$
	where $m_0$ is the restriction of the product in $\gpd$ to $\Jmm^{-1}(0)$. Because $P$ is a surjective submersion, then $m_{red}^*\eta_{red}=pr_1^*\eta_{red}+pr_2^*\eta_{red}$. Analogously, it is proved that $\omega_{red}$ is multiplicative.
\end{proof}

\begin{example}
	In Example \ref{ex:restriction gpd}, consider a Lie group $\Gp$ acting on $(\gpd,\omega)$ by  symplectic groupoid automorphisms. Let $\Jmm:\gpd \to \mathfrak{g}^*$ be a symplectic moment map such that it is a Lie groupoid morphism. Suppose that $\gpd_0$ and $M_0$ are  closed under the action of $\Gp$. In addition, we assume that the vector field $X\in \mathfrak{X}(\gpd)$ is $G$-invariant, and $X\langle \Jmm,v\rangle=0$ for all $v\in \g$. Therefore, the map $\Jmm_0:\gpd_0 \xto{i_0}\gpd \xto{\Jmm} \mathfrak{g}^*$ is a cosymplectic moment map and a Lie groupoid morphism. If $\Gp$ acts freely and properly on $M$, and $\Gp$ acts properly on $M_0$, then $(\gpd_0)_{red}=\Jmm_0^{-1}(0)/\Gp\toto M_0/\Gp$ is a cosymplectic groupoid. Giving rise to the commutative diagram 	$$
	\xymatrix@R=10pt@C=10pt{
		\Jmm_0^{-1}(0) \ar[rr]^{}\ar[dd]_{} \ar[rd]^{} & & \Jmm^{-1}(0) \ar'[d]^{}[dd] \ar[rd]^{}  \\
		& \gpd_0 \ar[rr]^{i_0\hspace{6mm}}   & & \gpd  \\
		(\gpd_{0})_{red} \ar[rr]_{i}  & & \gpd_{red},  \\ 
	}
	$$
	where $(\gpd_{red}=\Jmm^{-1}(0)/G, \omega_{red})\toto M/G$ is the reduced symplectic groupoid \cite{FOR}. Moreover, there exists a  $\Gp$-invariant vector field $X_0\in \mathfrak{X}(\Jmm^{-1}(0))$, which is  $i$-related to $X$. Also, $X_0$ is projectable  to a vector field $\bar{X}_0\in \mathfrak{X}(\gpd_{red})$. Thus, the reduced cosymplectic groupoid  coincides with $((\gpd_0)_{red},i^*\w_{red},i^*(\imath_{\bar{X_0}}\w_{red}))$.
\end{example}

\begin{example}
In Example \ref{Example: ReduNonCos}, consider the linear momentum associated with $\R^{n-k}\action T^*\R^n$. Thus,  the cosymplectic moment map $\Jmm:\gpd\to \R^{n-k}$ is defined by $\Jmm((q,p),\beta)=(p_{k+1},\ldots, p_{n})$. Then, $\Jmm^{-1}(0)\cong T^*\R^k\times\R^{n-k}\times  S^1$ canonically, and $\gpd_{red}=T^*\R^k\times S^1$. Note that the inclusion $i:\Jmm^{-1}(0)\hookrightarrow T^*\R^n\times S^1$ is by considering the zero section in the last $n-k$ coordinates, that is,  $$((q_1,p_1,\ldots,q_k,p_k),(q_{k+1},\ldots,q_n),\beta)\to((q_1,p_1,\ldots,q_k,p_k,q_{k+1},0,\ldots,q_n,0),\beta).$$ So $i^*(pr^*_{T^*\R^n}\omega_{can})=\sum_{j=1}^k dq_j\wedge dp_j$. Therefore $\omega_{red}=pr_{T^*\R^k}^*\sum_{j=1}^kdq_j\wedge dp_j$ and $\eta_{red}=pr_{S^1}d\theta$.
\end{example}

    \section{Consequences of the main theorem}
    \label{sect: consecuencias}
    This section is devoted to applying Theorem \ref{thm: MainTheorem}  to specific situations concerning the multiplicativity of cosymplectic forms in a Lie groupoid.   In particular, we are inspired by the methods presented in \cite{BC2,FOR} but in the context of cosymplectic structures. We also explore some of the main results in \cite{FI} when symmetries and reduction are considered.

    		\subsection{Symmetries and reduction at the infinitesimal level}
    		\label{sect:consecuencias1}
Under the existence of symmetries of a Lie algebroid of a cosymplectic groupoid, it is interesting to study how integration is related to reduction. Motivated by the results in \cite{BC2}, we study the reductions of the IM-forms associated with the cosymplectic forms.
    		
 We state the geometric setup for IMcs-algebroids as follows.
\begin{itemize}
\item[i)]   Let $(A,\mu,\nu,\zeta)$ be an IMcs algebroid over a Poisson manifold $(M,\pi_M)$ (cf. Definition \ref{def:IMCS});
\item[ii)]  a Lie group $\Gp$ with an action $\varphi^A:\Gp\times A\to A$ by Lie algebroid morphisms, covering an action on $M$ (denoted by $\varphi^M$);
\item[iii)] $\mu$ and $\zeta$ are $\Gp$-equivariant and $\nu$ is $\Gp$-invariant, i.e.	
    		$$\mu\circ \varphi_g^A=(d\varphi_{g^{-1}}^M)^*\circ \mu,\qquad \zeta\circ \varphi_g^M=\varphi_g^A\circ \zeta\qquad \nu \circ \varphi_g^A=\nu,\qquad  \text{ for all } g\in\Gp.$$
\end{itemize} 
Recall the map $J:A\to \mathfrak{g}^*$ from Proposition \ref{prop:over-mm}, and assume that $J^{-1}(0)$ is a vector subbundle of $A$  (for instance, if $\mu(A)$ and $\mu(A)\cap J_{can}^{-1}(0)$ have constant rank). Analogously to the construction of $\mu_{red}$ in Proposition \ref{prop:over-0-level}, we obtain $\nu_0: J^{-1}(0)\to \mathbb{R}$ as the restriction via the inclusion map $J^{-1}(0)\to A$. Moreover, by the geometric setup, $\nu_0$ is $\Gp$-invariant. Hence, it is possible to define the bundle map $\nu_{red}:A_{red}\to \mathbb{R}$ such that
\begin{equation}\label{eq:mu-red}
p^*(\nu_{red}(a))=\nu_0(p^*(a)), \qquad \text{ for all } a\in \Gamma(A_{red}).
\end{equation}
 The following diagram illustrates the pullback map that appears in this equation,
    	$$
    	\xymatrix@R=10pt@C=10pt{
    		J^{-1}(0)\simeq p^*A_{red} \ar[rr]\ar[dd]_{} \ar[rd]_{\nu_0} & &  A_{red}\ar'[d]^{}[dd] \ar[rd]^{\nu_{red}}  \\
    	& \R \ar[rr]_{}   & & \R  \\
    	M \ar[rr]_{p}  & & M/G.   \\ 
    }
    	$$	
\begin{lemma}\label{lem:IM-red}
The forms $\mu_{red}$ and $\nu_{red}$ are IM-forms on $A_{red}$, and there exists a section $\zeta_{red}\in \Gamma(A_{red})$ such that $\Ker(\mu_{red})=\langle \zeta_{red} \rangle.$
\end{lemma}
\begin{proof} 
The claim about $\mu_{red}$ has already been proved in Proposition \ref{prop:over-0-level}, and the argument is analogous for $\nu$. For the reader’s convenience, we present the proof of \eqref{eq:nu} for $\nu_{red}$. Let $a,b\in\Gamma(A_{red})$. Then
\begin{align*}
	p^*\nu_{red}([a,b])&=\nu_0([p^*a,p^*b])=\rho(p ^*a)\nu_0(p^*b)-\rho(p^*b)\nu_0(p^*a)\\
    			&=\rho(p ^*a)p^*\nu_{red}(b)-\rho(p^*b)p^*\nu_{red}(a)=p^*(\rho_{red}(a)\nu_{red}(b)-\rho_{red}(n)\nu_{red}(a)).
    			\end{align*}
Finally, note that $\zeta\in \Gamma(A)$ belongs to $\Gamma(J^{-1}(0))$, since $\mu(\zeta)=0$ and $J=J_{can}\circ \mu$. Together with the $G$-equivariance of $\zeta$, this yields $\zeta_{red}\in \Gamma(A_{red})$ via the quotient map on sections $p^*:\Gamma(A_{red})\to \Gamma(J^{-1}(0))$. It remains to show that $\Ker (\mu_{red})=\langle \zeta_{red} \rangle$. Indeed, let $a_r\in \Ker (\mu_{red})$. Then 
\[
0=p^*(\mu_{red}(a_r))=\mu_0(p^*a_r)=\mu(a),
\] 
where the second identity follows from the pullback bundle map $p^*$ and the last equality holds since $a\in \Gamma(J^{-1}(0))$. As $\Ker (\mu)=\langle \zeta \rangle$ and $\zeta\in \Gamma(J^{-1}(0))$, this implies $a=\zeta$, and therefore its quotient section is $a_r=\zeta_{red}$.
\end{proof}
We now assume that the IMcs-algebroid $(A,\mu,\nu,\zeta)$ (over a Poisson manifold $(M,\pi_M)$) is integrated by a source-simply-connected cosymplectic Lie groupoid $(\gpd,\omega,\eta,\xi)\rightrightarrows M$. 
Also consider the $\Gp$-equivariant Lie groupoid morphism
    		\begin{equation}\label{eq:Gpd-mm}
    		\Jmm:\gpd\to \g^*,
    		\end{equation}
that satisfies $\imath_{v_\gpd} \w=-d\langle \Jmm,v\rangle$ for all $v\in \g$ (see Proposition \ref{prop:over-integration}).  Since $\gpd$ is a cosymplectic groupoid, the map $\Jmm$ becomes a cosymplectic momentum map, as shown below.

\begin{lemma}
\label{lemma:IntegracionJcosymplecticomm}
The action of  $\Gp$ on $\gpd$ is by cosymplectic diffeomorphisms. Moreover, the map $-\Jmm:\gpd\to \g^*$ defined in \eqref{eq:Gpd-mm} is a cosymplectic moment map, and $\xi \langle \Jmm,v\rangle=0$ for all $v\in \g$.
\end{lemma}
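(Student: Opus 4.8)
The plan is to verify, in order, the three assertions: (i) the $\Gp$-action on $\gpd$ preserves both multiplicative forms $\omega$ and $\eta$; (ii) $-\Jmm$ is a cosymplectic moment map, i.e. the Hamiltonian vector field of $\langle -\Jmm, v\rangle$ equals the fundamental vector field $v_\gpd$ and $\imath_{v_\gpd}\eta = 0$; and (iii) $\xi\langle \Jmm, v\rangle = 0$ for all $v\in\g$.

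First I would establish (i). The $\Gp$-action on $\gpd$ was obtained by integrating, via Lie's second theorem, the infinitesimal action $\Gp \action A$ by Lie algebroid automorphisms. Since $(\mu,\nu)$ are the IM-forms of $(\omega,\eta)$ and the action preserves $\mu$ (equivariantly, but $T^*M$ carries the cotangent lift so this is the right notion of invariance for the IM-data) and $\nu$, the integrated groupoid automorphisms $\varphi_g^{\gpd}$ preserve the multiplicative forms they induce: here I would invoke the fact (from \cite{BC,FI}) that a multiplicative form on a source-simply-connected groupoid is determined by its IM-form, so $(\varphi_g^\gpd)^*\omega$ and $(\varphi_g^\gpd)^*\eta$ have the same IM-data as $\omega$ and $\eta$, hence coincide with them. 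Thus the action is by cosymplectic groupoid automorphisms.

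Next, for (ii), the relation $\imath_{v_\gpd}\omega = -d\langle \Jmm, v\rangle$ is already recorded in the excerpt (from \cite[Prop.~3.2]{BC2}), so $\imath_{v_\gpd}\omega = d\langle -\Jmm, v\rangle$. To conclude that $v_\gpd = X_{\langle -\Jmm, v\rangle}$ in the cosymplectic sense (equation \eqref{eq:Hamvf}) I must show $\imath_{v_\gpd}\eta = 0$; then automatically $\xi\langle -\Jmm, v\rangle = \imath_\xi\,\imath_{v_\gpd}\omega = 0$ too, giving (iii). The identity $\imath_{v_\gpd}\eta = 0$ should follow from the infinitesimal data: the fundamental vector field $v_\gpd$ at a unit $\epsilon(x)$ is $\rho(a_v)(x)$ for the section $a_v$ of $A$ generating $v$, and since the IM-form $\nu$ is computed by $\epsilon^*(\imath_{u^R}\eta) = \nu(u)$ together with $\imath_{\rho(a)}\nu(b) = 0$-type compatibility, one reads off $\nu(a_v) = \langle\text{(the }\g^*\text{-component of the algebroid moment map)}, v\rangle$; by construction of $J$ in \eqref{eq:A-mm} as $J_{can}\circ\mu$, its value only involves $\mu$, not $\nu$, so the $\nu$-pairing with the generators vanishes. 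Multiplicativity of $\eta$ then propagates $\imath_{v_\gpd}\eta = 0$ from a neighborhood of the units to all of $\gpd$ (using that $\gpd$ is source-connected and $v_\gpd$ is, by naturality of the integration, a multiplicative vector field, so $\imath_{v_\gpd}\eta$ is a multiplicative function, hence determined by its restriction to units).

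I expect the main obstacle to be the careful bookkeeping in showing $\imath_{v_\gpd}\eta = 0$ globally rather than just at the units: one needs either a multiplicativity/propagation argument for the function $\imath_{v_\gpd}\eta$ on $\gpd$, or a direct argument that $v_\gpd$ lies in $\ker\eta = T\F$ everywhere. The cleanest route is probably to note that $v_\gpd$ is the right-invariant-type vector field associated to the section $a_v$ (or, more precisely, that the $\Gp$-action generators are multiplicative vector fields integrating the algebroid-level generators), combine this with the IM-relation $\epsilon^*(\imath_{a_v^R}\eta) = \nu(a_v) = 0$, and then use that a multiplicative $1$-form contracted with a multiplicative vector field yields a multiplicative (hence additively-determined) function on a source-connected groupoid, forcing it to vanish once it vanishes along $M$. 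Everything else is a direct transcription of the symplectic-groupoid arguments of \cite{BC2} into the cosymplectic setting, together with the bracket/anchor formulas \eqref{prop:centralext A}.
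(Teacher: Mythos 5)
Your part (i) matches the paper's argument: both verify that $(\varphi_g^\gpd)^*\w$ and $(\varphi_g^\gpd)^*\eta$ have the same IM-data as $\w$ and $\eta$ and invoke the fact that multiplicative forms on a source-simply-connected groupoid are determined by their IM-forms. The derivation of $\xi\langle\Jmm,v\rangle=0$ from $\imath_\xi\w=0$ once $\imath_{v_\gpd}\w=-d\langle\Jmm,v\rangle$ is known also agrees with the paper. The problem is your argument for the key identity $\imath_{v_\gpd}\eta=0$ on all of $\gpd$.

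There are two concrete issues there. First, you identify the fundamental vector fields $v_\gpd$ with right-invariant vector fields $a_v^R$ attached to sections of $A$; this conflates inner and outer symmetries. The $\Gp$-action is by groupoid \emph{automorphisms}, so its generators are multiplicative vector fields covering $v_M$, not right-invariant ones, and the IM-pairing $\epsilon^*(\imath_{a^R}\eta)=\nu(a)$ is not directly about $v_\gpd$. Second, and more seriously, your propagation step is false as stated: a multiplicative function $f$ on $\gpd$ satisfies $f(\epsilon(x))=f(\epsilon(x)\epsilon(x))=2f(\epsilon(x))$, so \emph{every} multiplicative function vanishes on the units; vanishing along $M$ therefore carries no information and cannot force $f\equiv 0$ (indeed $\langle\Jmm,v\rangle$ itself is a nonzero multiplicative function vanishing on $M$). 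A multiplicative function is determined by its differential along $A$, not by its restriction to $M$, so you would still have to show $d(\imath_{v_\gpd}\eta)=0$. The paper's route is shorter and avoids multiplicativity of $v_\gpd$ altogether: since the action covers the $M$-action, $v_\gpd|_M=v_M$ is tangent to $M$, and $\epsilon^*\eta=0$ gives $(\imath_{v_\gpd}\eta)|_M=0$; then invariance of $\eta$ together with $d\eta=0$ and Cartan's formula yields $0=\mathcal{L}_{v_\gpd}\eta=d(\imath_{v_\gpd}\eta)$, so $\imath_{v_\gpd}\eta$ is locally constant on the connected manifold $\gpd$ and hence identically zero. Replacing your propagation step by this argument closes the gap; the rest of your outline then goes through.
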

\begin{proof}
To verify that $\w$ is $\Gp$-invariant, we use that that $\mu$ is $\Gp$-equivariant. Note that
\begin{align*}
	(\varphi_g^\gpd)^*\w (u,d\epsilon X)&=\w (d\varphi_g^\gpd(u),d\varphi_g^\gpd(d\epsilon X))=\w (d\varphi_g^\gpd(u),d\epsilon(d\varphi_g^M X))\\
	&=(d\varphi_{g^{-1}}^M )^*\mu(u)(d\varphi_g^M X)=\mu(u)(X)=\w(u,d\epsilon X),
\end{align*}
where $u\in A$ and $X\in TM$. Thus, from \cite[Thm. 4.6]{BC}  we conclude that $(\varphi_g^\gpd)^*\w=\w$. Similarly, $\Gp$-invariance of $\eta$ is proved by using that $\nu$ is $\Gp$-invariant. 
To prove that $-\Jmm$ is a cosymplectic moment map we  use that	the $\Gp$-action on $\gpd$ covers the action on $M$. In particular,  we have $v_\gpd(x)=v_M(x)\in T_xM$ for $v\in \g$ and  $x\in M$. Because $\eta$ is multiplicative, then $\epsilon^*\eta=0$. Thus,  $T M \subset \ker \eta$ and $(\imath_{v_\gpd}\eta)|_M=0$. Since $\Gp$ preserves the form $\eta$, we deduce $0=\mathcal{L}_{v_\gpd}\eta=d\imath_{v_\gpd}\eta$. Therefore, $\imath_{v_\gpd}\eta=0.$ Furthermore, from $\imath_{v_\gpd} \w=-d\langle \Jmm,v\rangle$,	
	we conclude $-\xi \langle \Jmm,v\rangle=\imath_\xi \imath_{v_\gpd}\w =0$.
\end{proof}

As a first application of Theorem \ref{thm: MainTheorem}, and applying Proposition \ref{prop:over-integration} together with the preceding lemmas, we obtain the main result of this section.

\begin{theorem}\label{thm:red vs int}
Let $(A,\mu,\nu,\zeta)$ be an IMcs algebroid over a Poisson manifold $(M,\pi_M)$, and let $\Gp$ be a Lie group acting freely and properly on $M$. Assume that the lifted action of $\Gp$ on $A$ preserves the IMcs-algebroid structure, as described in the geometric setup $i)$–$iii)$. Consider the map $J:A\to\mathfrak{g}^*$ from Proposition \ref{prop:over-mm}. If $J^{-1}(0)$ is a vector subbundle of $A$, then:
\begin{enumerate}
\item $(A_{red},\mu_{red},\nu_{red},\zeta_{red})$ is an IMcs-algebroid over $(M/\Gp,\pi_{red})$.
\item If $(\gpd,\omega,\eta)\toto M$ is the source simply-connected cosymplectic Lie groupoid integrating $A$, then $(\gpd_{red},\omega_{red},\eta_{red})\toto M/\Gp$ is a cosymplectic Lie groupoid integrating $(A_{red},\mu_{red},\nu_{red},\zeta_{red})$.
\end{enumerate}
\end{theorem} 

\begin{proof}
For the first item, by Lemma \ref{lem:IM-red}, it suffices to check that 
$\mu_{red}:A_{red}\to T^*(M/\Gp)$ is surjective. Take $\alpha_r\in T^*(M/\Gp)$ 
and $a\in A$ such that $\mu(a)=p^*\alpha_r$. Then, for any $u\in\mathfrak{g}$,
\[
J(a)(u)=J_{can}(p^*\alpha_r)(u)=i_{u_M}p^*\alpha_r=0.
\]
Hence, for every $\alpha_r\in T^*(M/\Gp)$ there exists $a\in J^{-1}(0)$ with 
$\mu_0(a)=\alpha_r$. Passing to the quotient of sections, we obtain 
$a_{red}$ with $\mu_{red}(a_{red})=\alpha_r$. Thus, $\mu_{red}$ is a surjective 
bundle map, and yields the central extension
\[
0\to \langle \zeta_{red} \rangle \to A_{red}
\overset{\mu_{red}}{\longrightarrow} T^*(M/\Gp)\to 0,
\]
with $\underline{\nu}_{red}(a_{red})=\nu_{red}(a_{red})\zeta_{red}$ as a splitting.

For the second item, by Lie's second theorem we integrate the $\Gp$-action on $A$ to a $\Gp$-action on $\gpd$, and we also integrate the map  $J:A\to \g^*$ to a groupoid morphism $\Jmm:\gpd\to \g^*$,  analogously to Proposition 
\ref{prop:over-integration}. Lemma \ref{lemma:IntegracionJcosymplecticomm} states that the map $\Jmm$ is also a cosymplectic moment map. By using Theorem \ref{thm: MainTheorem}, we conclude that $(\gpd_{red},\omega_{red},\eta_{red})\toto M/G$ is a cosymplectic Lie groupoid. As mentioned in Proposition \ref{prop:over-integration}, $A_{red}=Lie(\gpd_{red})$. Furthermore, let $u \in A_{red}$, then using the groupoid morphism $(P,p):(\Jmm^{-1}(0)\to M)\to (\gpd_{red}\to M/G)$, we have
    			\begin{align*}
    				p^*\epsilon^*(\imath_{u^R}\eta_{red})&=\epsilon^*P^*(\imath_{u^R}\eta_{red})=\epsilon ^*(\imath_{p^*(u^R)}P ^*\eta_{red})=\epsilon ^*(\imath_{p^*(u^R)}i^*\eta)\\ &=\nu_0(p^*(u))=p^*(\nu_{red}(u)). 
    			\end{align*} 	
    			Since $p$ is a surjective submersion we get $\epsilon^*(\imath_{u^R}\eta_{red})=\nu_{red}(u)$. Analogously for $\mu_{red}$. 
\end{proof}
 		
        In other words, for a cosymplectic groupoid, the reduction of its infinitesimal counterpart is the same as the infinitesimal version of its cosymplectic reduction. Thus, we have that		
    		$$\xymatrix{
    			(\gpd, \omega, \eta) \ar[d]^{\text{Lie}}  \ar[r]^{\hspace{-10mm}\text{red}}& (\gpd_{red}, \omega_{red}, \eta_{red}) \ar[d]^{\text{Lie}}\\
    			(A,\mu,\nu,\zeta)\ar[r]^{\hspace{-10mm}\text{red}} & (A_{red}, \mu_{red},\nu_{red},\zeta_{red}),
    		}$$
    		commutes. The following is an immediate consequence of this theorem.

    		\begin{corollary}
    		\label{coro:main-coro}
Let $(T^*M\oplus \R , \mu=pr_1,\nu=pr_2,\zeta=\partial r)$ be a trivial central extension Lie algebroid with brackets as in \eqref{prop:centralext A}. If there is a $\Gp$ action on $M$ which lifts to an action by Lie algebroid morphism on $A$ so that  $\mu$ is equivariant  and  $\nu$ is invariant, then 	
$$(T^*(M/\Gp)\oplus \R , \mu_{red}=pr_1,\nu_{red}=pr_2,\partial r)$$
is also a central extension algebroid isomorphic to $A_{red}$.
    		\end{corollary}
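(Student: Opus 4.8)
The plan is to read the statement off directly from Theorem \ref{thm:red vs int}, combined with the identification of the Lie algebroid of a cosymplectic groupoid with a central extension $T^*\bullet\oplus\R$ recalled around \eqref{prop:centralext A}. As in the standing setup of Section \ref{sect:consecuencias1} we take the $\Gp$-action on $M$ to be free and proper; to invoke Theorem \ref{thm:red vs int} we also assume $A$ integrable by a source-simply-connected groupoid, though the purely infinitesimal statement can alternatively be obtained from the algebroid reduction recalled in Section \ref{sect:consecuencias1}. Observe first that the base action is by Poisson diffeomorphisms, which is immediate from $\varphi_g^A$ preserving the anchor $\rho=\pi_M^{\sharp}\circ\mu$ together with the $\Gp$-equivariance of $\mu=pr_1$; hence $M/\Gp$ carries a reduced Poisson structure $\pi_{M/\Gp}$ for which $p\colon M\to M/\Gp$ is a Poisson submersion.

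Next I would verify that the hypothesis ``$J^{-1}(0)\subset A$ is a subbundle'' holds automatically here. Since $\mu=pr_1\colon T^*M\oplus\R\to T^*M$ is a surjective submersion and $J=J_{can}\circ\mu$, we have $J^{-1}(0)=\mu^{-1}\big(J_{can}^{-1}(0)\big)=J_{can}^{-1}(0)\oplus\R$. Freeness of the action on $M$ makes $v\mapsto v_M(x)$ injective for every $x$, so $(J_{can})_x\colon T^*_xM\to\g^*$ is surjective, $0$ is a regular value of $J_{can}$, and $J_{can}^{-1}(0)$, the annihilator of the orbit directions, is a subbundle. Therefore Theorem \ref{thm:red vs int} applies: $(A_{red},\mu_{red},\nu_{red})$ over $M/\Gp$ is integrated by a cosymplectic groupoid $(\gpd_{red},\w_{red},\eta_{red})\toto M/\Gp$ with $A_{red}=Lie(\gpd_{red})$, and, as shown inside the proof of that theorem, $\mu_{red}$ and $\nu_{red}$ are exactly the IM $2$-form and IM $1$-form induced by $\w_{red}$ and $\eta_{red}$.

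Finally I would apply the structure result for cosymplectic groupoids (the discussion around \eqref{prop:centralext A}) to $\gpd_{red}$ itself: the assignment $a\mapsto\big(\mu_{red}(a),\nu_{red}(a)\big)$ is a Lie algebroid isomorphism $A_{red}=Lie(\gpd_{red})\xrightarrow{\ \sim\ }T^*(M/\Gp)\oplus\R$, where the target carries the anchor and bracket of \eqref{prop:centralext A} relative to the Poisson structure $\bar t_*\pi_{\gpd_{red}}$ on $M/\Gp$, and under this isomorphism $\mu_{red}$ and $\nu_{red}$ become the projections $pr_1$ and $pr_2$. Since $\bar t_*\pi_{\gpd_{red}}=p_*\pi_M=\pi_{M/\Gp}$ (both are the unique Poisson structure making $p$ Poisson, as $\gpd_{red}=\Jmm^{-1}(0)/\Gp$ with $t$ descending to $\bar t$), this is precisely the claimed isomorphism of central extension algebroids. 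I do not anticipate a genuine obstacle; the one point meriting care is that \eqref{prop:centralext A} is stated on exact generators $(df,\lambda)$, so the bracket identification should be checked on such generators and extended by the Leibniz rule, using that $X_f$ on $M$ and $X_{f\circ p}$ on $M/\Gp$ are $p$-related.
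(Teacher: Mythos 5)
Your proposal is correct and follows exactly the route the paper intends: the corollary is stated there as an immediate consequence of Theorem \ref{thm:red vs int} with no further proof, and your expansion (checking that $J^{-1}(0)=J_{can}^{-1}(0)\oplus\R$ is a subbundle via freeness, then applying the theorem and the identification $a\mapsto(\mu(a),\nu(a))$ for the reduced cosymplectic groupoid) is precisely the intended derivation. The extra care you take with the standing hypotheses (free and proper action, source-simply-connected integrability) and with matching the Poisson structure on $M/\Gp$ is consistent with the paper's setup.
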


	\subsection{Reduction of the symplectic leaf}
 	 	\label{sect:consecuencias2}
 	 	A natural question that arises is the symplectic reduction of the leaves of a cosymplectic groupoid. In this section we study how symplectic and cosymplectic reduction are related in the restriction to the symplectic leaf containing the units of a cosymplectic groupoid. 
 
		Given a cosymplectic groupoid $(\gpd,\omega,\eta)\toto M$,  let $\Sigma$ be the symplectic leaf of $\pi_\gpd$ containing $M$. Let $L:\Sigma\to \gpd$  denote the inclusion. In \cite{FI} it is proved that $\Sigma\subset \gpd$ is a Lie subgroupoid, and  $(\Sigma,L^*\omega)\toto M$ is  a symplectic groupoid integrating $\pi_M$. Assuming that we impose symmetries on the cosymplectic groupoid $(\gpd,\w,\eta)$ so that we can construct its cosymplectic reduced groupoid, we inquire about the symmetries and the reduction of $(\Sigma,L^*\w)\toto M$. 
		We want to restrict the action $\Gp$ to the leaf $\Sigma$, and we want to define a symplectic moment map.
		
	To show that the action of $\Gp$ on $\gpd$ is restricted to an action on $\Sigma$, we first state a basic fact about morphisms between Poisson manifolds.
\begin{lemma}
	\label{lem:PoissonMorLeaves}
Let $M$ and $N$ be Poisson manifold, and let $\varphi:M\to N$ be a Poisson diffeomorphism. Then, $\varphi$ preserves the symplectic foliation.
\end{lemma}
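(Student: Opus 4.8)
The plan is to reduce the statement to the standard description of the symplectic foliation as the (singular) foliation integrating the characteristic distribution $\mathrm{Im}(\pi_M^\sharp)\subseteq TM$, combined with the observation that a Poisson \emph{diffeomorphism} intertwines the sharp maps of the two Poisson structures. First I would record the infinitesimal compatibility: since $\varphi$ is a Poisson map, $\{f\circ\varphi,g\circ\varphi\}_M=\{f,g\}_N\circ\varphi$ for all $f,g\in C^\infty(N)$, and rephrasing this in terms of bivectors while using that $\varphi$ is a diffeomorphism yields $\varphi_*\pi_M=\pi_N$, equivalently $d\varphi_x\circ\pi_{M,x}^\sharp\circ(d\varphi_x)^*=\pi_{N,\varphi(x)}^\sharp$ for every $x\in M$. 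Because $(d\varphi_x)^*\colon T^*_{\varphi(x)}N\to T^*_xM$ is an isomorphism, passing to images gives $d\varphi_x\big(\mathrm{Im}\,\pi_{M,x}^\sharp\big)=\mathrm{Im}\,\pi_{N,\varphi(x)}^\sharp$; that is, $d\varphi$ carries the characteristic distribution of $M$ isomorphically onto that of $N$.

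Next I would upgrade this pointwise statement to the level of leaves. Equivalently, one can phrase the argument via Hamiltonian vector fields: the Poisson condition gives $\varphi_*X^M_{f\circ\varphi}=X^N_f$ for every $f\in C^\infty(N)$, so $\varphi$ conjugates (locally defined) Hamiltonian flows on $M$ to Hamiltonian flows on $N$. Since the symplectic leaves of a Poisson manifold are exactly the orbits of the pseudogroup generated by these flows — equivalently, the maximal integral submanifolds of the characteristic distribution — and $\varphi$ is a diffeomorphism, it maps each such orbit onto an orbit: if $\mathcal{O}$ is the leaf through $x$, then $\varphi(\mathcal{O})$ is the leaf through $\varphi(x)$. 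Hence $\varphi$ sends the symplectic foliation of $M$ onto that of $N$. Moreover, since $\varphi^*\pi_N=\pi_M$ and the leafwise symplectic forms are determined by inverting the restricted bivectors, the restriction $\varphi|_{\mathcal{O}}\colon\mathcal{O}\to\varphi(\mathcal{O})$ is a symplectomorphism, so the full symplectic-foliation data is preserved.

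There is no genuinely hard step here; the content is purely a matter of unwinding definitions. The only point deserving care is that one truly needs $\varphi$ to be a diffeomorphism: a general Poisson map need not preserve symplectic leaves (it can map a leaf into a strictly larger one), and it is precisely the invertibility of $d\varphi$, hence of $(d\varphi)^*$, that forces the images of the two sharp maps to correspond.
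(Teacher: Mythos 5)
Your argument is correct and follows essentially the same route as the paper: both proofs use that a Poisson diffeomorphism intertwines Hamiltonian vector fields (hence their flows) and that the symplectic leaves are the orbits of the pseudogroup generated by Hamiltonian flows, so leaves map onto leaves. Your additional remarks (the pointwise identification of characteristic distributions via the sharp maps, and that the restriction to each leaf is a symplectomorphism) are correct refinements but not needed beyond what the paper records.
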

\begin{proof}
    This fact follows from the $\varphi$-related Hamiltonian vector fields on the respective Poisson manifolds.
\end{proof}

 It is important to highlight that the leaves of the symplectic foliation may be non-invariant by the action of a group by Poisson automorphisms (cf. \cite[Ex. 4.2.5]{ORbook}). Moreover, in the case of \textit{standard momentum maps} on Poisson manifolds, a necessary condition for the existence of a momentum map is that the action by Poisson morphisms being leaf preserving, which does not happen in general \cite[\S 4.5]{ORbook}. However, in the case of a cosymplectic groupoid, the action does preserve the symplectic leaf containing the units. Thus,  we can restrict the action to this leaf.
\begin{lemma}
	\label{lem:resrtictaction}
	Let $\Gp$ be a Lie group acting on a cosymplectic groupoid $(\gpd, \omega, \eta)\toto M$ by cosymplectic groupoid automorphisms. Let $\Sigma$ be the symplectic leaf of $\pi_\gpd$ containing $M$. Then, we can  restrict the action $\Gp\action \gpd$ to a $G$-action on $\Sigma$, preserving  the form $L^*\w$.
\end{lemma}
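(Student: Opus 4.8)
The plan is to exploit the fact that a cosymplectic automorphism is in particular a Poisson automorphism, combine this with Lemma~\ref{lem:PoissonMorLeaves}, and then use that the action is by \emph{groupoid} automorphisms to pin down that the particular leaf $\Sigma$ is preserved.

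First I would note that for each $g\in\Gp$ the map $\varphi_g^\gpd:\gpd\to\gpd$ is a Poisson diffeomorphism of $(\gpd,\pi_\gpd)$: this was recorded in the preliminaries, where it is shown that a cosymplectic diffeomorphism is automatically a Poisson morphism between the associated Poisson structures. Hence, by Lemma~\ref{lem:PoissonMorLeaves}, $\varphi_g^\gpd$ maps symplectic leaves of $\pi_\gpd$ to symplectic leaves. Next, because the action on $\gpd$ is by groupoid automorphisms covering the action on $M$, we have $\varphi_g^\gpd(\epsilon(x))=\epsilon(g\cdot x)$ for every $x\in M$, so $\varphi_g^\gpd(M)=M$ as a subset of $\gpd$. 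Therefore $\varphi_g^\gpd$ sends the leaf $\Sigma$ containing $M$ to the leaf containing $\varphi_g^\gpd(M)=M$, which is again $\Sigma$; applying the same reasoning to $g^{-1}$ gives $\varphi_g^\gpd(\Sigma)=\Sigma$. This yields a set-theoretic action $\Gp\action\Sigma$, which moreover restricts the structural maps (since these were $\Gp$-equivariant on $\gpd$), so $\Sigma\toto M$ carries an induced $\Gp$-action covering the one on $M$.

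To check smoothness of the restricted action I would use that $\Sigma$, being a leaf of the regular symplectic foliation $\ker\eta$ of $\pi_\gpd$, is an initial (weakly embedded) submanifold of $\gpd$; consequently any smooth map into $\gpd$ with image contained in $\Sigma$ is smooth as a map into $\Sigma$, and in particular the composite $\Gp\times\Sigma\hookrightarrow\Gp\times\gpd\to\gpd$ factors smoothly through $\Sigma$. Finally, invariance of $L^*\omega$ is a one-line pullback computation: writing $\varphi_g^\Sigma$ for the restriction, we have $L\circ\varphi_g^\Sigma=\varphi_g^\gpd\circ L$, hence $(\varphi_g^\Sigma)^*L^*\omega=L^*(\varphi_g^\gpd)^*\omega=L^*\omega$ since $\varphi_g^\gpd$ is cosymplectic.

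The only genuinely delicate point is the step showing that $\varphi_g^\gpd$ preserves the specific leaf $\Sigma$, rather than merely permuting the collection of all leaves; this is precisely where one must invoke that $\varphi_g^\gpd$ is a groupoid morphism, so that it fixes $M$ as a subset, and not just an arbitrary Poisson automorphism. Everything else (smoothness of the restriction, invariance of $L^*\omega$) is routine once the initial-submanifold property of leaves is available.
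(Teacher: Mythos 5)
Your proposal is correct and follows essentially the same route as the paper: use that cosymplectic diffeomorphisms are Poisson, invoke Lemma~\ref{lem:PoissonMorLeaves} to see that $\varphi_g^\gpd$ permutes symplectic leaves, observe that groupoid automorphisms fix $M$ setwise so the leaf through $M$ is sent to itself, and conclude invariance of $L^*\omega$ by the same pullback computation. Your additional remarks on smoothness of the restriction via the initial-submanifold property of leaves only make explicit a point the paper leaves implicit.
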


\begin{proof}
The action $\Gp\action \gpd$ is by Poisson diffeomorphisms. By Lemma \ref{lem:PoissonMorLeaves}, any left multiplication
either transforms  the  leaf $\Sigma$ into another leaf of the symplectic foliation, or leaves it invariant. Furthermore, for any $g\in G$, the units $M$ are contained in $\varphi_g(\Sigma)$. Therefore, by the definition of $\Sigma$, we have $\varphi^\gpd_g(\Sigma)=\Sigma$. Let $\varphi^\Sigma$ denote the action of $\Gp$ on $\Sigma$.    In addition, we verify that
$$(\varphi^\Sigma_g)^*L^*\w=L^*(\varphi_g^\gpd)^* \w=L^*\w.$$
\end{proof}
With this action of $\Gp$ on $\Sigma$ by symplectic groupoid automorphisms, we can show that the groupoid morphism $\Jmm_0:\Sigma\xto{L}\gpd\xto{\Jmm}\g^*$ is a symplectic moment map. For the convenience of the reader, we recall the assumptions of Theorem \ref{thm: MainTheorem}: Let $\Gp$ be a Lie group acting on a cosymplectic groupoid $(\gpd, \omega, \eta)\toto M$ by cosymplectic groupoid automorphisms.	Let $\Jmm:\gpd\to \g^*$ be a cosymplectic moment map such that it is a Lie groupoid morphism, and $\xi(\langle \Jmm,v\rangle)=0$ for all $v\in \g$. Moreover, assume that $\Gp$ acts freely and properly on $M$.

\begin{lemma}
	\label{prop:MomentMapaSymplectic}
	Let $\Gp$ be a Lie group acting on a cosymplectic groupoid $(\gpd, \omega, \eta)\toto M$,  and  let $\Jmm:\gpd\to \g^*$ be a cosymplectic moment map satisfying the conditions of Theorem \ref{thm: MainTheorem}. Let $\Sigma$ be the symplectic leaf containing $M$.  Then the groupoid morphism $\Jmm_0:(\Sigma, L^*\omega)\to \g^*$ defined by $L^*\Jmm$ is a symplectic moment map with respect to the $\Gp$-action in Lemma \ref{lem:resrtictaction}.
\end{lemma}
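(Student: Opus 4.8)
The plan is to check directly the two defining properties of a symplectic moment map for $\Jmm_0=\Jmm\circ L$: $\Gp$-equivariance, and that for every $v\in\g$ the fundamental vector field $v_\Sigma$ of the $\Gp$-action on $\Sigma$ is the Hamiltonian vector field of $\langle\Jmm_0,v\rangle$ with respect to the symplectic form $L^*\omega$, i.e.\ $\imath_{v_\Sigma}(L^*\omega)=d\langle\Jmm_0,v\rangle$. (Recall that $(\Sigma,L^*\omega)\toto M$ is genuinely symplectic, so this statement makes sense.) Equivariance is immediate: $\Jmm$ is $\Gp$-equivariant since it is a cosymplectic moment map, and by Lemma~\ref{lem:resrtictaction} the leaf $\Sigma$ is $\Gp$-invariant, so $L:\Sigma\hookrightarrow\gpd$ is $\Gp$-equivariant and hence so is the composite $\Jmm_0=\Jmm\circ L$.

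For the Hamiltonian identity, I would first note that $v_\Sigma$ is $L$-related to $v_\gpd$: by Lemma~\ref{lem:resrtictaction} the action on $\Sigma$ is the restriction of the action on $\gpd$, so $L\circ\varphi^\Sigma_g=\varphi^\gpd_g\circ L$, and differentiating along the one-parameter subgroup generated by $v$ at $g=e$ gives $dL\circ v_\Sigma=v_\gpd\circ L$. (Equivalently, $\imath_{v_\gpd}\eta=0$ forces $v_\gpd\in\ker\eta=T\F$, hence $v_\gpd$ is tangent to every leaf of $\F$, in particular to $\Sigma$.) Since $\Jmm$ is a cosymplectic moment map, applying \eqref{eq:Hamvf} to $X_{\langle\Jmm,v\rangle}=v_\gpd$ and using the hypothesis $\xi(\langle\Jmm,v\rangle)=0$ yields $\imath_{v_\gpd}\omega=d\langle\Jmm,v\rangle$. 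Pulling back along $L$ and using $L$-relatedness then gives $\imath_{v_\Sigma}(L^*\omega)=L^*(\imath_{v_\gpd}\omega)=L^*d\langle\Jmm,v\rangle=d\langle\Jmm_0,v\rangle$, which is exactly the moment map relation. (The $\eta$-term would disappear upon pulling back in any case, since $\Sigma$ is a leaf of $\F=\ker\eta$, so $L^*\eta=0$.)

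This is a chain of elementary identities, so I do not expect a genuine obstacle here. The only delicate point — and it is really the content of the preceding lemmas rather than of this one — is that the $\Gp$-action restricts to $\Sigma$ at all, which relies on Lemma~\ref{lem:PoissonMorLeaves} (a Poisson automorphism permutes symplectic leaves) together with $M\subset\varphi_g(\Sigma)$ forcing $\varphi_g(\Sigma)=\Sigma$, as in Lemma~\ref{lem:resrtictaction}. With that in hand, $L$-relatedness of the fundamental vector fields and naturality of pullback complete the argument.
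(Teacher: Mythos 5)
Your proof is correct and follows essentially the same route as the paper's: equivariance of $\Jmm_0$ from equivariance of $\Jmm$ plus $\Gp$-invariance of $\Sigma$, and the Hamiltonian identity by combining $\imath_{v_\gpd}\omega=d\langle\Jmm,v\rangle-\xi(\langle\Jmm,v\rangle)\eta$ with the hypothesis $\xi(\langle\Jmm,v\rangle)=0$ and pulling back along $L$ using tangency of $v_\gpd$ to the leaf. Your explicit justification that $v_\gpd$ is tangent to $\Sigma$ (via $v_\gpd\in\ker\eta$ or via $L$-relatedness) is a small addition the paper leaves implicit, but the argument is the same.
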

 \begin{proof}
 Since the action of $\Gp$ on $\Sigma$ is the restriction of the action $\varphi^\gpd:\Gp\action \gpd$, for any $g\in \Gp$  $L\circ\varphi^\Sigma_g=\varphi^\gpd_g\circ L$. Thus, the $\Gp$-equivariance of $\Jmm_0$ follows from the $\Gp$-equivariance of $\Jmm$, that is $Ad_g^*\Jmm_0=\Jmm\circ \varphi^\gpd_g\circ L=\Jmm_0\varphi^\Sigma_g.$

Moreover, for $v\in \g$, the restriction $v_{\gpd}|_\Sigma$ of its fundamental vector field  is actually a vector field on $\Sigma$. Hence, $L^*\imath_{v_\gpd}\omega=\imath_{v_\gpd} L^*\omega$. Since $\Jmm$ is a cosymplectic moment map and $\xi(\langle J,v\rangle)=0$, we conclude $$d\langle \Jmm_0, v\rangle=L^*d\langle \Jmm, v\rangle=L^*(\imath_{v_\gpd}\omega+\xi\langle \Jmm, v\rangle\eta)=L^*\imath_{v_{\gpd}}\omega=\imath_{v_\gpd}L^*\omega.$$
 \end{proof}

Under the above assumptions, the conditions of \textit{cosymplectic Hamiltonian action} and \textit{symplectic Hamiltonian action} are satisfied.  Therefore, we can perform the structure reductions. These two reductions can be done simultaneously by observing that the diagram 
 	 	 		 \begin{equation}
 	 	 		 \label{eq:SympCosymdiagram}
 	 	 		 \xymatrix@R=10pt@C=10pt{
 	 	 		 	\Jmm_0^{-1}(0) \ar[rr]^{L}\ar[dd]_{L_0} \ar[rd]^{P_0} & & \Jmm^{-1}(0) \ar'[d]^{i}[dd] \ar[rd]^{P}  \\
 	 	 		 	& \Jmm_0^{-1}(0)/\Gp \ar[rr]_{L_{red}\hspace{5mm}}   & & \gpd_{red}  \\
 	 	 		 	(\Sigma, L^*\omega) \ar[rr]_{L}  & & (\gpd, \omega,\eta).   \\ 
 	 	 		 }
 	 	 		 \end{equation}
 	 commutes. The reduced symplectic groupoid $\Jmm_{0}^{-1}(0)/G$ is constructed by considering the symplectic structure as in \cite{MW}, which is compatible with the structural maps of the groupoid \cite[Prop. 4.6]{FOR}.  As a direct consequence we get the next theorem.
 	 	\begin{theorem}
 	 		\label{thm:mainSimplecticLeaf}
 	 		Let $\Gp$ be a Lie group acting on a cosymplectic groupoid $(\gpd, \omega, \eta)\toto M$ by cosymplectic groupoid maps,  and  let $\Jmm:\gpd\to \g^*$ be  a cosymplectic moment map satisfying the conditions of Theorem \ref{thm: MainTheorem}. Let $\Sigma$ the symplectic leaf containing $M$. Then 
 	 		$$L_{red}^*\omega_{red}=(L^*\omega)_{red},$$
 	 		where $L:\Sigma\to \gpd$ is the inclusion and $L_{red}: \Jmm_0^{-1}(0)/\Gp\to \Jmm^{-1}(0)/\Gp$.
 	 	\end{theorem}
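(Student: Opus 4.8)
The plan is to exploit the commutative diagram \eqref{eq:SympCosymdiagram} and the defining relations of the two reduced structures, so that the identity $L_{red}^*\omega_{red}=(L^*\omega)_{red}$ becomes a consequence of the injectivity of pullback along a surjective submersion. First I would recall the two characterizations: by Theorem \ref{thm: MainTheorem} the reduced cosymplectic form $\omega_{red}$ on $\gpd_{red}=\Jmm^{-1}(0)/\Gp$ is the unique form satisfying $P^*\omega_{red}=i^*\omega$, while by the classical Marsden--Weinstein construction (see \cite{MW} and \cite[Prop. 4.6]{FOR}), applied to the symplectic groupoid $(\Sigma,L^*\omega)$ with the symplectic moment map $\Jmm_0$ of Lemma \ref{prop:MomentMapaSymplectic}, the reduced symplectic form $(L^*\omega)_{red}$ on $\Jmm_0^{-1}(0)/\Gp$ is the unique form with $P_0^*\,(L^*\omega)_{red}=L_0^*(L^*\omega)$, where $L_0:\Jmm_0^{-1}(0)\hookrightarrow\Sigma$ is the inclusion and $P_0:\Jmm_0^{-1}(0)\to\Jmm_0^{-1}(0)/\Gp$ the quotient projection. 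One preliminary point to check is that $\Jmm_0^{-1}(0)=L^{-1}(\Jmm^{-1}(0))$ as subsets of $\Sigma$ and that the $\Gp$-action on $\Sigma$ restricts to a free and proper action on this level set, so that the quotient is a smooth manifold and $L_{red}$ is well-defined; this follows from Lemma \ref{lem:resrtictaction} together with the freeness/properness on $M$ (hence on $\gpd$, by \cite{FOR}), and the fact that $L$ is an embedding.

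Next I would compute $P_0^*\bigl(L_{red}^*\omega_{red}\bigr)$ by chasing the square. The lower-left triangle of \eqref{eq:SympCosymdiagram} gives $P\circ i\circ L = ?$; more precisely, the relevant commutativity is $P\circ L = L_{red}\circ P_0$ on $\Jmm_0^{-1}(0)$ (top face) and $i\circ L = L\circ L_0$ (front face, with the inclusions). Therefore
\begin{equation*}
P_0^*\bigl(L_{red}^*\omega_{red}\bigr)=(L_{red}\circ P_0)^*\omega_{red}=(P\circ L)^*\omega_{red}=L^*\bigl(P^*\omega_{red}\bigr)=L^*\bigl(i^*\omega\bigr).
\end{equation*}
On the other hand $L^*\circ i^* = (i\circ L)^* = (L\circ L_0)^* = L_0^*\circ L^*$, so the right-hand side equals $L_0^*\bigl(L^*\omega\bigr)$, which is precisely $P_0^*\bigl((L^*\omega)_{red}\bigr)$ by the Marsden--Weinstein characterization. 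Hence $P_0^*\bigl(L_{red}^*\omega_{red}\bigr)=P_0^*\bigl((L^*\omega)_{red}\bigr)$, and since $P_0$ is a surjective submersion, $P_0^*$ is injective on forms, giving $L_{red}^*\omega_{red}=(L^*\omega)_{red}$.

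The only genuine subtlety — the step I expect to be the main obstacle — is justifying that the Marsden--Weinstein reduction of $(\Sigma,L^*\omega)$ by $\Jmm_0$ is indeed carried out on the manifold $\Jmm_0^{-1}(0)/\Gp$ with $L_0,P_0$ as in the diagram, i.e. that all the hypotheses of the classical theorem hold for the restricted data. This requires that $0$ be a clean (in fact regular) value of $\Jmm_0:\Sigma\to\g^*$; here I would argue as in Remark \ref{rmk:0-regular} and Lemma \ref{lemma: FreeactionRegularvalue}, noting that along $M$ the fundamental vector fields $v_\Sigma$ agree with $v_M$ and are nonzero for $v\neq0$ by freeness, and that $v_\Sigma = \flat_\Sigma^{-1}(d\langle\Jmm_0,v\rangle)$ via the symplectic Lichnerowicz isomorphism on $\Sigma$, so $d\Jmm_0$ is surjective along the level set. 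Once regularity and the free-proper action are in place, $(\Sigma,L^*\omega,\Jmm_0)$ is a genuine Hamiltonian $\Gp$-space and the cited results of \cite{MW,FOR} apply verbatim; the rest of the argument is the diagram chase above. I would also remark that the same computation shows the Reeb-type data is consistent, but no extra statement about $\eta$ is needed here since the leaf $\Sigma$ is symplectic and $\eta$ restricts to zero on it.
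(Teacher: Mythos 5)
Your proposal is correct and follows essentially the same route as the paper: both use the defining relations $P^*\omega_{red}=i^*\omega$ and $P_0^*(L^*\omega)_{red}=L_0^*(L^*\omega)$, chase the commutative diagram \eqref{eq:SympCosymdiagram} to get $P_0^*L_{red}^*\omega_{red}=L_0^*L^*\omega$, and conclude from the injectivity of $P_0^*$ for the surjective submersion $P_0$. Your preliminary checks (regularity of $0$ for $\Jmm_0$ via the argument of Remark \ref{rmk:0-regular}, and freeness/properness of the restricted action) match the paper's appeal to \cite{FOR} and that remark.
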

 	 	\begin{proof}
	 		Since $\Gp$ acts freely and properly on $M$, the $\Gp$-action on $\Sigma$ is also free and proper \cite{FOR}. Thus, by a similar argument as in Remark \ref{rmk:0-regular}, 0 is a regular value of $\Jmm_0$. Moreover,  $(\Jmm_0^{-1}(0)/\Gp, (L^*\w)_{red})\toto M/G$ is a symplectic groupoid \cite{FOR}.
	 		The reduced symplectic form $(L^*\omega)_{red}$
            on $\Jmm_0^{-1}(0)/\Gp$ is characterized by $$L_0^*(L^*\omega)=P_0^*(L^*\omega)_{red},$$ where $L_0:\Jmm_0^{-1}(0)\to \Sigma$ is the inclusion map and $P_0:\Jmm_0^{-1}(0)\to \Jmm_0^{-1}(0)/\Gp$ is the quotient map. By the commutativity in \eqref{eq:SympCosymdiagram}, we have 
 	 		$$L_0^*L^*\omega=L^*i^*\omega=L^*P^*\omega_{red}=P_0^*L_{red}^*\omega_{red}.$$
 	 		Therefore, $P_0^*(L^*\omega)_{red}=P_0^*L_{red}^*\omega_{red}$. As usual, $P_0$ is a surjective submersion, so the desired result is obtained.
 	 	\end{proof}

 	 	\subsection{Reduction of the multiplicative Chern class}
 	 	 \label{sect:consecuencias3} 
 
As mentioned in Section \ref{sec:cs-gpd}, \cite{FI} establishes a relation between $S^1$-central extensions with vanishing multiplicative Chern class and cosymplectic groupoids. We use this identification and Theorem \ref{thm: MainTheorem} to show that if the multiplicative Chern class vanishes, the same happens in the quotient.

Let $(\gpd,\omega)$ be a corank 1, orientable, proper oversymplectic groupoid over $M$, whose  foliation $\ker \omega$ is simple. As in Theorem \ref{thm:CC-cosym}, there is a one-to-one correspondence between a cosymplectic groupoid structure on $(\gpd,\omega)$ and the vanishing of the multiplicative Chern class. Thus, if $\gpd$ is a cosymplectic groupoid, we are interested in understanding the behavior of the multiplicative Chern class under the reduction procedure described in Theorem \ref{sect: MainTheorem}. Assume a $G$-action on $\gpd$ and a cosymplectic moment map $\Jmm:\gpd\to \g^*$ as in Theorem \ref{thm: MainTheorem}. Note that although $\gpd$ is proper, the reduced groupoid $\gpd_{red}\toto M/G$ may be a non-proper groupoid. However, with additional conditions the reduced groupoid is proper.
 	 	\begin{lemma}
 	 		\label{lem:PropeGroupoid}
 	 		Let $\Gp$ be a Lie group acting on a proper Lie groupoid $\gpd\toto M$ by groupoid morphisms. Let $\mathcal{H}\toto M$ be a closed Lie subgroupoid of $\gpd$. Suppose that $\mathcal{H}$ is closed under the action of $\Gp$. If  $\Gp$ acts  freely and properly on $M$, and the map $M\xto{p} M/\Gp$ is proper, then $\mathcal{H}/\Gp\toto M/\Gp$ is a proper Lie groupoid.
 	 	\end{lemma}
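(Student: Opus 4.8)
The plan is to reduce the statement to elementary properness arguments, using the characterization that a Lie groupoid $\mathcal{K}\toto N$ is proper exactly when $(t,s)\colon \mathcal{K}\to N\times N$ is a proper map. First I would set up the quotient groupoid. Since $\Gp$ acts freely and properly on $M$, the induced action on $\gpd$ is free and proper (see \cite[Prop. 4.4]{FOR}), and restricting to the closed $\Gp$-invariant submanifold $\mathcal{H}$ yields a free and proper action on $\mathcal{H}$; hence $\mathcal{H}/\Gp$ is a smooth Hausdorff manifold. As $\Gp$ acts by groupoid morphisms and $\mathcal{H}$ is $\Gp$-invariant, the structural maps of $\mathcal{H}$ descend, so $\mathcal{H}/\Gp\toto M/\Gp$ is a Lie groupoid; write $P\colon\mathcal{H}\to\mathcal{H}/\Gp$ for the quotient map and $\bar t,\bar s$ for the descended target and source, so that $(P,p)$ is a groupoid morphism and $\bar t\circ P=p\circ t$, $\bar s\circ P=p\circ s$.

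Next I would observe that $\mathcal{H}$ is itself a proper Lie groupoid: the map $(t,s)\colon\mathcal{H}\to M\times M$ is the restriction of the proper map $(t,s)\colon\gpd\to M\times M$ to the closed submanifold $\mathcal{H}$, and the restriction of a proper map to a closed subset is proper. It then remains to prove that $(\bar t,\bar s)\colon\mathcal{H}/\Gp\to(M/\Gp)\times(M/\Gp)$ is proper. Given a compact set $K\subset(M/\Gp)\times(M/\Gp)$, properness of $p$ implies properness of $p\times p$, so $\widetilde K\ce(p\times p)^{-1}(K)\subset M\times M$ is compact; and properness of $\mathcal{H}$ then makes $C\ce(t,s)^{-1}(\widetilde K)\subset\mathcal{H}$ compact. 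A short diagram chase, using that $\widetilde K$ is the \emph{full} preimage of $K$, gives $P(C)=(\bar t,\bar s)^{-1}(K)$: for $\gamma\in C$ one has $(\bar t(P\gamma),\bar s(P\gamma))=(p\,t(\gamma),p\,s(\gamma))\in K$, so $P(C)\subseteq(\bar t,\bar s)^{-1}(K)$; conversely, any class in $(\bar t,\bar s)^{-1}(K)$ has every representative $\gamma'$ satisfying $(t(\gamma'),s(\gamma'))\in(p\times p)^{-1}(K)=\widetilde K$, hence $\gamma'\in C$. Therefore $(\bar t,\bar s)^{-1}(K)=P(C)$ is a continuous image of a compact set, so compact, and $(\bar t,\bar s)$ is proper; thus $\mathcal{H}/\Gp\toto M/\Gp$ is a proper Lie groupoid.

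I do not expect a serious obstacle here, since the argument is entirely formal once the reduced groupoid is in place; the only point that needs a little care is to choose $\widetilde K$ to be the full preimage $(p\times p)^{-1}(K)$ rather than merely some compact set mapping onto $K$, so that the identity $P(C)=(\bar t,\bar s)^{-1}(K)$ holds exactly and the compactness conclusion is not lost when passing between $\mathcal{H}$ and $\mathcal{H}/\Gp$. (One could also note, as a sanity check, that the hypotheses force $\Gp$ to be compact, since a free proper action with proper quotient map has compact orbits, but the proof above only uses properness of $p$.)
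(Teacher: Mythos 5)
Your proof is correct and follows essentially the same route as the paper's: first establish that $\mathcal{H}$ is proper by restricting $(t,s)\colon\gpd\to M\times M$ to the closed subgroupoid, then combine properness of $p$ with properness of $(t,s)|_{\mathcal{H}}$ to descend to the quotient. The only difference is cosmetic — the paper phrases the second step via convergent subsequences of arrows, while you work directly with preimages of compact sets — and your care in taking $\widetilde K$ to be the full preimage $(p\times p)^{-1}(K)$ is exactly the point that makes the identity $P(C)=(\bar t,\bar s)^{-1}(K)$ exact.
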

 	 	\begin{proof}
Since the action of $\Gp$ on $\mathcal{H}$ is free and proper \cite{FOR}, the quotients $\mathcal{H}/\Gp$ and $M/\Gp$ are smooth manifolds. As the structural maps of $\mathcal{H}\toto M$ are $\Gp$-equivariant, then they descend to $\mathcal{H}/\Gp$. Now, we show that under these hypotheses $\mathcal{H}$ is a proper groupoid. To show that  $(s,t):\mathcal{H}\to M\times M$ is a proper map, we consider a sequence of arrows $y_n\xfrom{g_n}x_n\in \mathcal{H}$ such that $(x_n, y_n)\to (x,y)\in M\times M$, and we show that $g_n$ admits a convergent subsequence in $\mathcal{H}$ (see \cite{dH,Bour}). Since $\gpd$ is a proper groupoid, there exists a subsequence $g_{n_k}\to g\in \gpd$. Because $\mathcal{H}\subset \gpd$ is closed, then $g\in \mathcal{H}$.

Let $[y_n]\xfrom{[g_n]}[x_n]\in \mathcal{H}/\Gp$ be such that $([x_n],[y_n])\to ([x],[y])\in M/\Gp\times M/\Gp$. We want to show that $[g_n]$ admits a convergent subsequence in $\mathcal{H}/\Gp$. Since $(p,p):M\times M\to M/\Gp\times M/\Gp$ and $(s,t):\mathcal{H}\to M\times M$ are proper maps, there exists a subsequence $y_n\xfrom{g_{n_k}}x_n$ converging to $g\in \mathcal{H}$. By the continuity of the map $\mathcal{H}\to \mathcal{H}/\Gp$, we conclude that $[g_{n_k}]\to [g]\in \mathcal{H}/\Gp$.
 \end{proof}
 
 Next, we state a result which follows directly from Theorem \ref{thm: MainTheorem} and the characterization of cosymplectic groupoids in terms of $S^1$-central extensions with vanishing multiplicative Chern class presented in Theorem \ref{thm:CC-cosym}.

	\begin{proposition}
		\label{thm:mainChern}
		Let $\Gp$ be a Lie group acting on a cosymplectic groupoid $(\gpd, \omega, \eta)\toto M$,  and  let $\Jmm:\gpd\to \g^*$  be a cosymplectic moment map satisfying the conditions of Theorem \ref{thm: MainTheorem}. If 
		$p:M\to M/G$ is a proper map, then for $(\gpd_{red},\w_{red},\eta_{red})\toto M/\Gp$ the corresponding $S^1$-central extension has vanishing multiplicative Chern class.
	\end{proposition}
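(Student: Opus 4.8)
The plan is to read the statement directly off Theorem~\ref{thm: MainTheorem} together with the characterization of cosymplectic groupoids inside the class of oversymplectic groupoids from \cite[Thm.~4.12]{FI}. First I would invoke Theorem~\ref{thm: MainTheorem}: its hypotheses are precisely those assumed here, so $(\gpd_{red},\w_{red},\eta_{red})\toto M/\Gp$ is a cosymplectic groupoid. From this I extract the structural data needed even to speak of an $S^1$-central extension of $\gpd_{red}$: the form $\w_{red}$ is closed and multiplicative, and since $(\gpd_{red},\w_{red},\eta_{red})$ is cosymplectic its kernel is the line spanned by the Reeb vector field, so $\rank(\w_{red})$ is constant and equal to $\dim\gpd_{red}-1=2\dim(M/\Gp)$; hence $(\gpd_{red},\w_{red})$ is an oversymplectic groupoid. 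Moreover the top wedge power of $\w_{red}$ against $\eta_{red}$ is a volume form, so $\gpd_{red}$ is orientable, and the kernel foliation of $\w_{red}$ is simple of corank $1$, as is automatic for cosymplectic groupoids (recalled above).

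The one point requiring actual work is the properness of $\gpd_{red}$, and here I would apply Lemma~\ref{lem:PropeGroupoid} with $\mathcal{H}=\Jmm^{-1}(0)$. By Lemma~\ref{lemma: FreeactionRegularvalue}, $0$ is a regular value of $\Jmm$, so $\Jmm^{-1}(0)\subset\gpd$ is a closed submanifold, and by Lemma~\ref{lem:Jlevel0subgroupoid} it is a wide Lie subgroupoid; it is closed under the $\Gp$-action because $\Jmm$ is $\Gp$-equivariant for the coadjoint action and $0\in\g^*$ is a fixed point of that action. Since $\gpd$ is proper (as assumed throughout this subsection), $\Gp$ acts freely and properly on $M$ (inherited from the hypotheses of Theorem~\ref{thm: MainTheorem}), and $p:M\to M/\Gp$ is proper by assumption, Lemma~\ref{lem:PropeGroupoid} gives that $\gpd_{red}=\Jmm^{-1}(0)/\Gp\toto M/\Gp$ is a proper Lie groupoid.

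With properness established, $(\gpd_{red},\w_{red})$ meets all the hypotheses under which the $S^1$-central extension $1\to (M/\Gp)\times S^1\to \gpd_{red}\xto{q}(\Sigma_1^{red},\w_1^{red})\to 1$ and its multiplicative Chern class are defined. To conclude I would simply quote \cite[Thm.~4.12]{FI}: the existence of a multiplicative $1$-form completing $\w_{red}$ to a cosymplectic structure --- namely $\eta_{red}$ --- is equivalent to the vanishing of the multiplicative Chern class of the associated $S^1$-central extension. Hence that class vanishes, which is the claim. I expect the only delicate step to be the bookkeeping around properness, i.e.\ verifying that $\Jmm^{-1}(0)$ satisfies all the hypotheses of Lemma~\ref{lem:PropeGroupoid}; everything else amounts to collecting the properties the reduced cosymplectic structure supplies for free and invoking \cite[Thm.~4.12]{FI}.
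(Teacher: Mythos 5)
Your proposal follows the same route as the paper: apply Theorem \ref{thm: MainTheorem} to obtain the reduced cosymplectic groupoid, use Lemma \ref{lem:PropeGroupoid} (with the standing properness assumption on $\gpd$ from this subsection) to get properness of $\gpd_{red}$, and conclude via the equivalence in \cite[Thm.~4.12]{FI}. You simply spell out in more detail the verification that $\Jmm^{-1}(0)$ meets the hypotheses of Lemma \ref{lem:PropeGroupoid} and that $(\gpd_{red},\w_{red})$ is an orientable corank-$1$ oversymplectic groupoid with simple kernel foliation, which the paper leaves implicit.
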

\begin{proof}
	By using Theorem \ref{thm: MainTheorem} we obtain a cosymplectic groupoid $(\gpd_{red},\w_{red},\eta_{red})\toto M/G$.	Since $M\to M/G$ was assumed to be a proper map, by Lemma \ref{lem:PropeGroupoid}, $\gpd_{red}$ is proper. Consequently, by using  Theorem \ref{thm:CC-cosym}, we conclude that the corresponding $S^1$-central extension of  $(\gpd_{red},\w_{red},\eta_{red})\toto M/\Gp$  has vanishing multiplicative Chern class.
\end{proof}

If we start with a corank 1, orientable, proper oversymplectic groupoid, with simple foliation $\ker \w$, such that the corresponding $S^1$-central extension has vanishing multiplicative Chern class,  the following theorem gives a way to produce new groupoids where the multiplicative Chern class vanishes.

\begin{theorem}
\label{thm:mainChern0}
Let $(\gpd, \omega)\toto M$ be a corank 1, orientable, proper oversymplectic groupoid, with simple foliation $\ker \w$, such that $H^1(\gpd)=0$. Let $\Gp$ be a compact, connected Lie group acting on $\gpd$ by Lie groupoid automorphisms preserving $\omega$. The following is true:
\begin{enumerate}
	\item If   the corresponding $S^1$-central extension has vanishing multiplicative Chern class, then there exists a cosymplectic groupoid $(\gpd,\w,\bar{\eta})$ and a moment map $\Jmm:\gpd\to \g^*$, such that $\xi(\langle \Jmm,v\rangle)=0$ for all $v\in \g$.
	\item If, in addition, $\Jmm$ is a groupoid morphism, $G$ acts freely on $M$, and $M\to M/G$ is a closed map, then for $(\gpd_{red},\w_{red},\bar{\eta}_{red})\toto M/\Gp$ the corresponding $S^1$-central extension has vanishing multiplicative Chern class.
\end{enumerate}
\end{theorem}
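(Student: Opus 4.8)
The strategy is to deduce (1) by first invoking \cite[Thm.~4.12]{FI} to produce a (possibly non‑invariant) cosymplectic $1$‑form, averaging it over the compact group $\Gp$ to make it invariant, and then feeding the resulting cosymplectic groupoid into Proposition~\ref{prop:existence of J}; statement (2) will then follow by combining Theorem~\ref{thm: MainTheorem} with the compact‑group version of Lemma~\ref{lem:PropeGroupoid} and the characterization \cite[Thm.~4.12]{FI} run in the opposite direction, exactly as in Proposition~\ref{thm:mainChern}.

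For (1): by hypothesis $(\gpd,\omega)\toto M$ is a corank‑$1$, orientable, proper oversymplectic groupoid with simple foliation $\ker\omega$ whose associated $S^1$‑central extension has vanishing multiplicative Chern class, so \cite[Thm.~4.12]{FI} supplies a multiplicative closed $1$‑form $\eta_0$ with $\imath_{\partial_\theta}\eta_0=1$ making $(\gpd,\omega,\eta_0)$ a cosymplectic groupoid. I would then set
$$\eta\ce \int_{\Gp}(\varphi_g^\gpd)^*\eta_0\,dg,$$
which is well defined because $\Gp$ is compact. Each $(\varphi_g^\gpd)^*\eta_0$ is multiplicative (pullback of a multiplicative form by a groupoid automorphism) and closed, hence so is $\eta$; and $\eta$ is $\Gp$‑invariant by construction, while $\omega$ is $\Gp$‑invariant by hypothesis. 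The step requiring care is that $(\gpd,\omega,\eta)$ is still cosymplectic: since $\partial_\theta$ spans $\ker\omega$, this amounts to checking that $\imath_{\partial_\theta}\eta=\int_\Gp \imath_{\partial_\theta}(\varphi_g^\gpd)^*\eta_0\,dg$ is nowhere zero, and each integrand is a nowhere‑vanishing function on the connected manifold $\gpd$ equal to $1$ at $g=e$; a sign argument, using that $\varphi_g^\gpd$ preserves $\omega$ and hence the line field $\ker\omega$ together with its co‑orientation, shows that these functions all have the same sign, whence the integral is nowhere zero. Granting this, $\Gp$ acts on $(\gpd,\omega,\eta)$ by cosymplectic groupoid automorphisms, and then $\imath_{v_\gpd}\eta=0$ for all $v\in\g$ exactly as in the proof of Lemma~\ref{lemma:IntegracionJcosymplecticomm}: along the units $v_\gpd(x)=v_M(x)\in T_xM\subset\ker\eta$ (because $\eta$ is multiplicative, $\epsilon^*\eta=0$), while $d(\imath_{v_\gpd}\eta)=\mathcal L_{v_\gpd}\eta-\imath_{v_\gpd}d\eta=0$ forces $\imath_{v_\gpd}\eta$ to be constant on the connected $\gpd$, hence identically zero. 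Now Proposition~\ref{prop:existence of J}, applied with the hypothesis $H^1(\gpd)=0$, yields a cosymplectic moment map $\Jmm\colon\gpd\to\g^*$ with $\xi(\langle\Jmm,v\rangle)=0$ for all $v\in\g$, which is the claim.

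For (2): keep the cosymplectic structure $(\omega,\eta)$ and the moment map $\Jmm$ produced in (1), and assume in addition that $\Jmm$ is a Lie groupoid morphism and that $\Gp$ acts freely on $M$; since $\Gp$ is compact, the $\Gp$‑actions on $M$ and on $\gpd$ are proper. Hence all hypotheses of Theorem~\ref{thm: MainTheorem} hold and $(\gpd_{red},\omega_{red},\eta_{red})\toto M/\Gp$ is a cosymplectic groupoid; being cosymplectic it is orientable, of corank $1$, and its characteristic foliation $\ker\omega_{red}$ is simple, these being general features of cosymplectic groupoids. It remains to check that $\gpd_{red}$ is proper. By Lemma~\ref{lem:Jlevel0subgroupoid} the level set $\Jmm^{-1}(0)\subset\gpd$ is a wide Lie subgroupoid; it is closed, being the preimage of a point under the continuous map $\Jmm$, and $\Gp$‑invariant because $\Jmm$ is $\Gp$‑equivariant for the coadjoint action and $0$ is fixed. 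Since $\gpd$ is proper, $\Gp$ is compact, and $M\to M/\Gp$ is a closed map, the compact‑group version of Lemma~\ref{lem:PropeGroupoid} gives that $\gpd_{red}=\Jmm^{-1}(0)/\Gp\toto M/\Gp$ is a proper Lie groupoid. Thus $\gpd_{red}$ satisfies all the hypotheses under which \cite[Thm.~4.12]{FI} applies, and since $\eta_{red}$ is a multiplicative $1$‑form making it cosymplectic, the corresponding $S^1$‑central extension has vanishing multiplicative Chern class.

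The main obstacle is the verification, in the second paragraph, that the $\Gp$‑averaged $1$‑form $\eta$ does not degenerate, i.e.\ that $\imath_{\partial_\theta}\eta$ remains nowhere zero; this is precisely where the standing assumption that $\gpd$ is connected and the hypothesis that $\Gp$ preserves $\omega$ are used. Once this is settled, the remainder is a matter of assembling Proposition~\ref{prop:existence of J}, Theorem~\ref{thm: MainTheorem}, the compact‑group form of Lemma~\ref{lem:PropeGroupoid}, and \cite[Thm.~4.12]{FI}.
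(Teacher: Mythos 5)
Your proposal is correct and follows essentially the same route as the paper: part (1) by producing a cosymplectic $1$-form from \cite[Thm.~4.12]{FI}, averaging it over the compact group, checking that the averaged form is still cosymplectic and satisfies $\imath_{v_\gpd}\bar\eta=0$, and then invoking Proposition~\ref{prop:existence of J} with $H^1(\gpd)=0$; part (2) by combining Theorem~\ref{thm: MainTheorem}, the closed-map (compact-group) version of Lemma~\ref{lem:PropeGroupoid}, and \cite[Thm.~4.12]{FI} in the reverse direction. The only divergence is the nondegeneracy check of the averaged form, which you phrase as a sign argument for $\imath_{\partial_\theta}\eta$, whereas the paper argues that the Lichnerowicz map $\flat$ is an isomorphism from the assertion $\imath_\xi\bar\eta\neq 0$ together with Remark~\ref{rmk:flat} --- the same point, treated with comparable brevity in both arguments.
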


\begin{proof} 
To prove item (1), we find an $\Gp$-invariant multiplicative 1-form $\bar \eta$ on $\gpd$, such that $(\gpd,\omega,\bar \eta)$ is a cosymplectic groupoid. Namely, the condition on the multiplicative Chern class defines a cosymplectic structure $(\gpd,\omega,\eta)$ (cf. Theorem \ref{thm:CC-cosym}). Because $\Gp$ is compact,  we consider the average
$$\bar{\eta}=\int_\Gp(\varphi^\gpd_g)^*\eta d\Lambda_g,$$
where $d\Lambda$ is the Haar measure of the compact group $\Gp$. From Haar measure invariance it follows that $\bar{\eta}$ is a $\Gp$-invariant, closed,  multiplicative 1-form in $\gpd$. Let $\xi$ be the Reeb vector field  of $(\gpd,\omega,\eta)$. Note that $d\varphi^\gpd_g\xi=f_g\xi$ for some non-vanishing $f_g\in C^{\infty}(\gpd)$, thus $\imath_{\xi}\bar \eta=\int_Gf_gd\Lambda_g$. Furthermore, if $f_g(h)=0$ for some $h\in \gpd$, because $\varphi^\gpd_g$ is a diffeomorphism, we conclude that $\xi(h)=0$, which  cannot happens.  Consequently, by the connectedness of $G$, $f_g$
has the same sign for all $g\in G$.

 We claim that  $(\gpd, \omega, \bar \eta)$ is a cosymplectic manifold. In fact,  $\xi$ satisfies $\imath_\xi\omega=0$ and $\imath_\xi\bar \eta\neq 0$. Thus, if the map $\flat:\mathfrak{X}(\gpd)\to \Omega^1(\gpd)$ defined in \eqref{eq:flat} associated with $\bar \eta$ is not injective, then we can consider $X\neq 0$ in the kernel of $\flat$. Note that $0=\imath_\xi\flat(X)=(\imath_X\bar \eta)(\imath_\xi \bar \eta)$. Hence,  $\bar \eta$ vanishes on $X$, and $\imath_X\omega=\flat(X)=0$. Thus, $X=f \xi$ for some non-vanishing smooth function $f$. However, $\imath_X \bar \eta=f\imath_\xi\bar \eta$, which is a contradiction. Consequently, $\flat$ is an isomorphism. We conclude this claim, by considering the normalization $\bar \xi$ of $\xi$, such that $\imath_{\bar \xi}\bar \eta=1$, and Remark \ref{rmk:flat}.
 
 Since $\bar{\eta}$ is invariant by the action, we get that $d(\imath_{v_{\gpd}}\bar{\eta})=\mathcal{L}_{v_{\gpd}}\bar{\eta}=0$. Because $\epsilon^*\bar{\eta}=0$, then $0=\imath_{v_M}\epsilon^*\bar{\eta}|_x=\imath _{v_{\gpd}}\bar{\eta}|_{\epsilon(x)}$ for $x\in M$.  Hence, $\imath_{v_{\gpd}}\bar{\eta}=0$ for all $v\in \mathfrak{g}$. The existence of the cosymplectic moment map $\Jmm:\gpd\to \g^*$ follows from Proposition \ref{prop:existence of J}.
 
 To prove item (2),  we consider the reduced cosymplectic groupoid $(\gpd_{red},\omega_{red}, \bar \eta_{red})\toto M/G$ obtained from Theorem \ref{thm: MainTheorem}. In Lemma \ref{lem:PropeGroupoid}, considering that the group $\Gp$ is compact, the condition that the function $M\to M/\Gp$ is proper is replaced by that of being a closed map, therefore $\gpd_{red}$ is proper. From Theorem \ref{thm:CC-cosym}, we conclude that the corresponding $S^1$-central extension of  $(\gpd_{red},\w_{red},\bar \eta_{red})\toto M/\Gp$  has vanishing multiplicative Chern class. 
 \end{proof}
	
 	\section{Further direction of study}\label{sec:FD}
 \begin{enumerate}
 	\item  In the cosymplectic reduction Theorem \ref{thm:cosymp reduction} it is assumed that  $v_Q$ is the Hamiltonian vector field of $\langle J,v\rangle$ and  $\xi\langle J,v\rangle=0$, this in particular implies that $\mathcal{L}_{v_Q}\w=0 $ and $\mathcal{L}_{v_Q}\eta=0$. However, we can suppose conformal symmetries, which leads us to consider the conditions
	$$\mathcal{L}_{v_Q}\w=-d(\xi\langle J,u\rangle)\wedge \eta \text{\ and\ }\mathcal{L}_{v_Q}\eta=f_v\eta$$ 
	for some non-vanishing function $f_v$ for each $v\in \g$.	In this context, the reduction theorems are different, and would state others relation with respect to the cosymplectic groupoids reduction.   More generally, we may consider two broader interpretations of conformal symmetries in cosymplectic structures, as we present below.
    \begin{itemize}
        \item A cosymplectic structure can be regarded as a symplectic structure on a line bundle $L\to M$ (cf. \cite{TVY}). 
It is given by a 2-form $\omega \in \Omega^2(DL,\mathbb{R}_M)$, where $DL$ denotes the Atiyah algebroid and $\mathbb{R}_M$ is the trivial line bundle. 
In this setting, a symplectic structure on $M$ with values in $L$ is a pair $(L,\omega)$, where $\omega$ is non-degenerate and closed with respect to the trivial representation of $DL$ on $\mathbb{R}_M$.

In this context, one expects a cosymplectic groupoid to be a line VB-groupoid 
$$
\xymatrix{
	L\ar@<0.5ex>[r] \ar@<-0.5ex>[r]\ar[d]^{} &L_M\ar[d]\\  
	\gpd\ar@<0.5ex>[r] \ar@<-0.5ex>[r]&M ,
}
$$
together with a multiplicative cosymplectic structure $\omega\in \Omega^2(DL,\mathbb{R}_\gpd)$. 
The approach to conformal symmetries in this framework is motivated by the trivial line bundle $L=\mathbb{R}\times M$, 
where a cosymplectic isomorphism with values in $L\to M$ can be translated into a diffeomorphism $\phi:M\to M$ together with a smooth function $g$ such that
$$
\omega=\phi^*\omega+dg\wedge \phi^*\eta,\qquad \eta=\phi^*\eta .
$$ 
In this way, the action of a Lie group leads to
$$
L_{u_M}\omega = dG_u\wedge \eta,\qquad L_{u_M}\eta = 0,
$$
where the functions $G_u$ provide a natural candidate for a moment map of the action.

\item A cosymplectic structure can be regarded as a closed 2-form that restricts to a symplectic form on an involutive corank-one distribution $H\subset TM$.\footnote{Contact geometry appears as the non-integrable case.} 
Multiplicativity should then be interpreted as a condition on the distribution $H\subset T\gpd$, requiring it to define an involutive wide subgroupoid (cf.~\cite{CSa}). 
A cosymplectic isomorphism should preserve the distribution; that is, it consists of a diffeomorphism $\phi$ together with a smooth function $g$ such that 
\[
\omega = \phi^*\omega, 
\qquad \text{and} \qquad 
\phi^*\eta = g\,\eta,
\]
since we must have $\phi_*H \subseteq H$. 
For a Lie group action, this translates into
\[
L_{u_M}\omega = 0, 
\qquad 
L_{u_M}\eta = G_u \,\eta.
\]
\end{itemize}

	
	\item  In \cite[Prop. 3.3]{FI}, given an IMcs-algebroid $(A,\mu,\nu,\zeta)$, the authors establish an isomorphism of Lie bialgebroids $(A,A^*) \simeq (T^*M \oplus \mathbb{R}, TM \oplus \mathbb{R}).$ However it is also known 
	the existence of generalized Lie bialgebroids \cite{IM} and triangular generalized Lie bialgebroids
	as in \cite{IMLP}. The reduction results of this article can be used to reduce such bialgebroids. They can also be used to compare other geometric terms, such as the modular class in the reduction in \cite{IMLP}.
	
	 \item Integration of Poisson structures can also be understand via Poisson sigma models \cite{CF}. 
	If the groupoid $\gpd$ is cosymplectic, it has a Poisson bracket. Furthermore,  the units $M$ also has a Poisson structure and a Lie algebroid  structure. Results in \cite{FI} and in these notes can be extended to the integration theory given in \cite{CF} via sigma models.
	
	\item The construction of the moment map in Section \ref{sect:consecuencias1} comes from the ideas in \cite{BC2}, where there is an action of a Lie group on an algebroid which lifts an action on the basis. By the Lie's second theorem, this action is integrated to an action of a Lie group on a Lie grouproid. It would be valuable to see if the ideas worked out here can be generalized by considering the action of a 2-group on a cosymplectic groupoid and its infinitesimal counterpart.
\end{enumerate}
\vspace{0.1cm}

\begin{small}
\noindent \textbf{Statements and Declarations:} The authors have no conflict of interest to declare that are relevant to this article.\\
\noindent \textbf{Data Availability Statements:} Data sharing not applicable to this article as no datasets were generated or analysed during the current study.
\end{small}

\bibliographystyle{abbrv}
\bibliography{ReferencesLMP}

 \vspace{0.1cm}
 			\sf{\noindent Daniel L\'opez Garcia\\
 				Universidade de Federal Fluminense (UFF),  \\ 
 			São Domingos, Niterói, 24210-200, Rio de Janeiro, Brazil.\\
 				danielflg@id.uff.br}\\
 			    Orcid: 0000-0002-8436-2599\\
 			
 			\sf{\noindent Nicolas Martinez Alba\\
 				Universidad Nacional de Colombia (UNAL), \\ 
 				Av- Cra 30 \#45-3, Bogot\'a 16486,
 				Bogot\'a, Colombia.\\
 				nmartineza@unal.edu.co}\\
	 			Orcid: 0000-0002-2697-3228
 \end{document}